\newtheorem{theorem}{Theorem}[section]
\newtheorem{lemma}[theorem]{Lemma}
\newcommand{\cD}{{\mathcal D}}
\newcommand{\cN}{{\mathcal N}}
\newcommand{\cP}{{\mathcal P}}
\newcommand{\cQ}{{\mathcal Q}}
\newcommand{\cT}{{\mathcal T}}
\newcommand{\bd}{{\mathbf d}}
\title[Recovering Tree-child Networks]{Recovering Tree-Child Networks from Shortest Inter-taxa Distance Information}
\author[Bordewich, Huber, Moulton, and Semple]{Magnus Bordewich \and Katharina T. Huber \and Vincent Moulton \and Charles Semple}
\address{Department of Computer Science, Durham University, Durham, DH1 3LE, United Kingdom}
\email{m.j.r.bordewich@durham.ac.uk}
\address{School of Computing Sciences, University of East Anglia, Norwich NR4 7TJ, United Kingdom}
\email{k.huber@uea.ac.uk}
\address{School of Computing Sciences, University of East Anglia, Norwich NR4 7TJ, United Kingdom}
\email{v.moulton@uea.ac.uk}
\address{School of Mathematics and Statistics, University of Canterbury, Christchurch, New Zealand}
\email{charles.semple@canterbury.ac.nz}
\thanks{The second and third authors were supported by the London Mathematical Society. The fourth author was supported by the New Zealand Marsden Fund.}
\subjclass{05C85, 68R10}
\keywords{Distance matrix, tree-child network, normal network}
\date{\today}
\begin{document}

\maketitle

\begin{abstract}
Phylogenetic networks are a type of leaf-labelled, acyclic, directed graph used by biologists to represent the evolutionary history of species whose past includes reticulation events. A phylogenetic network is tree-child if each non-leaf vertex is the parent of a tree vertex or a leaf. Up to a certain equivalence, it has been recently shown that, under two different types of weightings, edge-weighted tree-child networks are determined by their collection of distances between each pair of taxa. However, the size of these collections can be exponential in the size of the taxa set. In this paper, we show that, if we ignore redundant edges, the same results are obtained with only a quadratic number of inter-taxa distances by using the shortest distance between each pair of taxa. The proofs are constructive and give cubic-time algorithms in the size of the taxa sets for building such weighted networks.
\end{abstract}

\section{Introduction}

Distance-based methods collectively provide fundamental tools for the reconstruction and analysis of phylogenetic (evolutionary) trees. Two of the most popular and longstanding distance-based methods are UPGMA~\cite{sok58} and Neighbor Joining~\cite{sai87}. Loosely speaking, these methods take as input a distance matrix $\cD$ on a set $X$ of taxa, whose entries are the distances between pairs of taxa in $X$, and return an edge-weighted phylogenetic tree on $X$ that best represents $\cD$. Distances between taxa could, for example, measure the time since the two taxa separated from a common ancestor. Typically, the property underlying any distance-based method is the following: if $\cT$ is a phylogenetic tree whose edges are assigned a positive real-valued weight, then the pairwise distances between taxa is sufficient to determine $\cT$ and its edge weighting~\cite{bun74, sim69, zar65}. This property is frequently referred to as the Tree-Metric Theorem (see~\cite[Theorem~7.2.6]{sem03}).

While there exist numerous distance-based methods for reconstructing and analysing phylogenetic trees that aim to explicitly represent the treelike evolution of species, there are only a small number of such methods for phylogenetic networks. Yet, phylogenetic networks are necessary to accurately represent the ancestral history of sets of taxa whose evolution includes non-treelike (reticulate) evolutionary processes such as hybridisation and lateral gene transfer. As a step towards developing practical distance-based methods for reconstructing and analysing phylogenetic networks, in this paper we are interested in establishing analogues of the Tree-Metric Theorem for edge-weighted phylogenetic networks. In particular, we establish two such analogues for the increasingly prominent class of tree-child networks. Briefly, we show that, under two types of weightings, edge-weighted tree-child networks on $X$ with no redundant edges can be reconstructed from the pairwise shortest distances between taxa in time polynomial in the size of $X$. The first type of weighting induces an ultrametric, while the second type of weighting has the property that the pair of edges directed into a reticulation (\textit{i.e.} a vertex of in-degree two) have equal weight. We envisage that these results could lead to practical algorithms to construct phylogenetic networks from distance data.

In related work, Chan \textit{et al.}~\cite{cha06} take a matrix of inter-taxa distances and reconstruct an ultrametric galled network having the property that there is a path between each pair of taxa having the same length as that given in the matrix, if such a network exists. Willson~\cite{wil12} studied the problem of determining a phylogenetic network given the average distance between each pair of taxa, where each reticulation assigns a probability to the two edges directed into it. It is shown in~\cite{wil12} that such distances are enough to determine a phylogenetic network with a single reticulation cycle in polynomial time. Bordewich and Semple~\cite{bor16a}, the original starting point for this paper, showed that (unweighted) tree-child networks can be reconstructed from the multi-set of distances between taxa in polynomial time. Other methods have been developed for building phylogenetic networks from distance data (see, for example, \cite{wil07b, yu15}) but these use different approaches to associate a distance to a network than the ones presented here. In addition, Huber and Scholz~\cite{hub17} considered the problem of reconstructing phylogenetic networks from so-called symbolic distances, and it would be interesting to see if our new results could extend to such distances. Two other papers (specificially \cite{bor17, bor16b}) that are particularly relevant to the work of our paper, are discussed in more detail in the next section.

Although distance based methods may be considered not as accurate as other methods such as Maximum Likelihood, they still have an important role in studying large datasets and gaining quick results when exploring data. This role may even be more significant for phylogenetic networks, where the complexity of inferring optimal solutions is even higher than for phylogenetic trees. The next section details some background and gives the statements of the two main results.

\section{Main Results}

Throughout the paper, $X$ denotes a non-empty finite set. A {\em phylogenetic network $\cN$ on $X$} is a rooted acyclic directed graph with no parallel edges satisfying the following:
\begin{enumerate}[(i)]
\item the unique root has out-degree two;

\item the set $X$ is the set of vertices of out-degree zero, each of which has in-degree one; and

\item all other vertices either have in-degree one and out-degree two, or in-degree two and out-degree one.
\end{enumerate}
If $|X|=1$, then we additionally allow the directed graph consisting of the single vertex in $X$ to be a phylogenetic network. The vertices in $X$ are called {\em leaves}, while the vertices of in-degree one and out-degree two are {\em tree vertices}, and the vertices of in-degree two and out-degree one are {\em reticulations}. An edge directed into a reticulation is a {\em reticulation edge}; all other edges are {\em tree edges}. An element in $X$ is an {\em outgroup} if its parent is the root of $\cN$. Furthermore, an edge $e=(u, v)$ is a {\em shortcut} if there is a directed path in $\cN$ from $u$ to $v$ avoiding $e$. Note that, necessarily, a shortcut is a reticulation edge. In the literature, shortcuts are also known as {\em redundant} edges. Ignoring the weighting of the edges, Fig.~\ref{weighted}(i) shows a phylogenetic network with root $\rho$, outgroup $r$, and $X=\{r, x_1, x_2, x_3, x_4, x_5, x_6\}$. It has exactly two reticulations, but no shortcuts.

\begin{figure}
\input{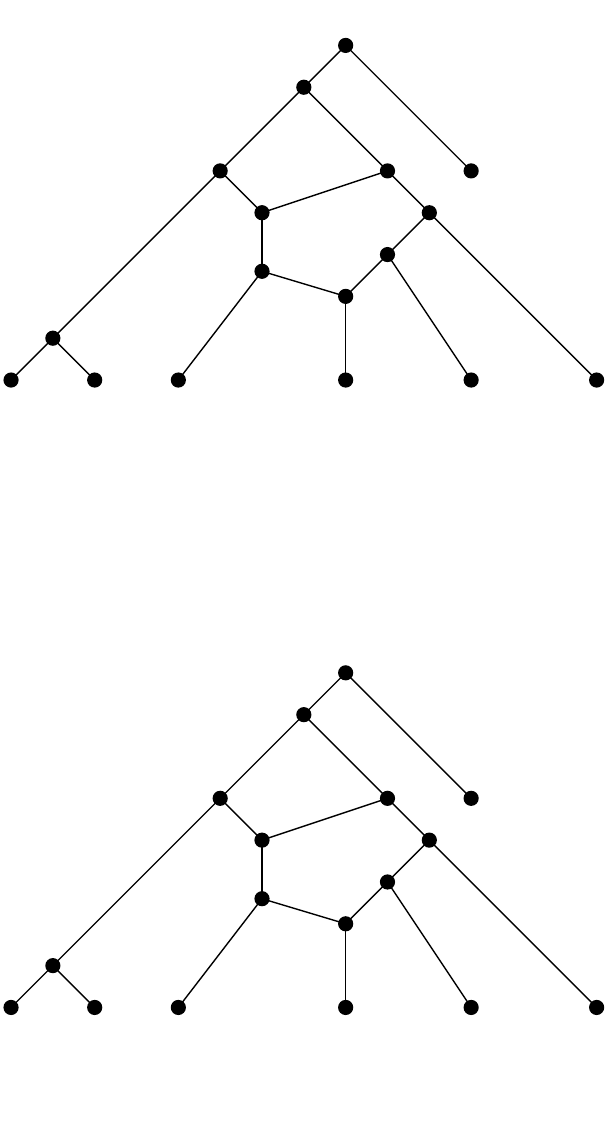_t}
\caption{Two (weighted) phylogenetic networks on $\{r, x_1, x_2, x_3, x_4, x_5, x_6\}$.}
\label{weighted}
\end{figure}

Let $\cN$ be a phylogenetic network. We say $\cN$ is {\em tree-child} if each non-leaf vertex in $\cN$ is the parent of either a tree vertex or a leaf. Moreover, $\cN$ is {\em normal} if it is tree-child and has no shortcuts. For example, the phylogenetic network in Fig.~\ref{weighted}(i) is normal. As with all figures in this paper, edges are directed down the page. Tree-child networks and normal networks were introduced by Cardona et al.~\cite{car09} and Willson~\cite{wil07a, wil10}, respectively.

Let $\cN$ be a phylogenetic network on $X$, and let $E$ denote the edge set of $\cN$. A {\em weighting} of $\cN$ is a function $w:E\rightarrow \mathbb R$ that assigns edges a non-negative real-valued weight such that tree edges are assigned a strictly positive weight. Thus reticulation edges are allowed weight zero. Let $(\cN, w)$ be a weighted phylogenetic network, and let $v$ and $v'$ be vertices in $\cN$. An {\em up-down path} $P$ from $v$ to $v'$ is an underlying path
$$v=u_0, u_1, u_2, \ldots, u_k=v',$$
where, for some $i$,
$$(u_i, u_{i-1}), (u_{i-1}, u_{i-2}), \ldots, (u_1, v)$$
and
$$(u_i, u_{i+1}), (u_{i+1}, u_{i+2}), \ldots, (u_{k-1}, v')$$
are edges in $\cN$. The {\em length} of $P$ is the sum of the weights of the edges in $P$ and, provide $v$ and $v'$ are distinct, $u_i$ is the {\em peak} of $P$. In Fig.~\ref{weighted}(i), there are exactly two up-down paths joining $x_3$ and $x_5$. One path has weight~$21$, while the other path has weight~$14$. As in this example, in this paper we are interested in the {\em inter-taxa distances} in $(\cN, w)$, that is, the lengths of the up-down paths connecting leaves in $X$. We next state two recent results~\cite{bor16b, bor17}. Both results prompted the two main results in this paper.

Let $(\cN, w)$ be a weighted phylogenetic network on $X$ and, for all $x, y\in X$, let $\cP_{x, y}$ be the set of up-down paths from $x$ to $y$ in $(\cN, w)$. The {\em multi-set of distances from $x$ to $y$}, denoted $\cD_{x, y}$, is the multi-set of the lengths of the paths in $\cP_{x, y}$. Similarly, the {\em set of distances from $x$ to $y$}, denoted $\overline{\cD}_{x, y}$, is the set of the lengths of the paths in $\cP_{x, y}$. Note that $\cD_{x, y}=\cD_{y, x}$, $\cD_{x, x}=\{0\}$, $\overline{\cD}_{x, y}=\overline{\cD}_{y, x}$, and $\overline{\cD}_{x, x}=\{0\}$ for all $x, y\in X$. 
The {\em multi-set distance matrix $\cD$ of $(\cN, w)$} is the $|X|\times |X|$ matrix whose $(x, y)$-th entry is $\cD_{x, y}$ (respectively, the {\em set distance matrix $\overline{\cD}$ of $(\cN, w)$} is the $|X|\times |X|$ matrix whose $(x, y)$-th entry is $\overline{\cD}_{x, y}$) for all $x, y\in X$, in which case we say  $\cD$ (resp.\ $\overline{\cD}$) is {\em realised by $(\cN, w)$}.

Let $(\cN, w)$ be a weighted phylogenetic network on $X$ and let $\cD$ be the multi-set distance matrix of $(\cN, w)$. The underlying problem we are investigating is determining how much $\cD$ says about $(\cN, w)$. The following highlights that, even with tree edges having positive weights, the weighting of $(\cN, w)$ cannot be determined exactly. Let $u$ be a reticulation in $\cN$ with parents $p_u$ and $q_u$, and let $v$ be the unique child of $u$. We can change the weighting of the edges incident with $u$ without changing $\cD$. In particular, provided the sum of the weights of $(p_u, u)$ and $(u, v)$ equal
$$w(p_u, u)+w(u, v)$$
and the sum of the weights of $(q_u, u)$ and $(u, v)$ equal
$$w(q_u, u)+w(u, v),$$
we can change the weights of $(p_u, u)$, $(q_u, u)$, and $(u, v)$ (keeping the weights of all other edges the same) to construct a different weighting, $w'$ say, so that $\cD$ is realised by $(\cN, w')$. We refer to this change as {\em re-weighting the edges at a reticulation} of $\cN$. For example, in Fig.~\ref{weighted}, $(\cN, w')$ has been obtained from $(\cN, w)$ by reweighting the edges at both reticulations. If $\cN$ has an outgroup $r$, a similar occurrence happens with the two edges incident with the root $\rho$ of $\cN$. In particular, now let $u$ be the child of $\rho$ that is not $r$. Note that $u$ is a tree vertex. Then, provided the weights of $(\rho, r)$ and $(\rho, u)$ sum to
$$w(\rho, r)+w(\rho, u),$$
we can change the weights of $(\rho, r)$ and $(\rho, u)$ (keeping the weights of all other edges the same) to construct a different weighting, $w''$ say, so that $\cD$ is realised by $(\cN, w'')$. We refer to this change as {\em re-weighting the edges at the root} of $\cN$.

Let $(\cN, w)$ and $(\cN_1, w_1)$ be two weighted phylogenetic networks on $X$. We say $(\cN, w)$ and $(\cN_1, w_1)$ are {\em equivalent} if $\cN$ is isomorphic to $\cN_1$, and $w_1$ can be obtained from $w$ by re-weighting the edges at each reticulation and re-weighting the edges at the root. For example, the weighted phylogenetic networks in Fig.~\ref{weighted} are equivalent.

\begin{figure}
\input{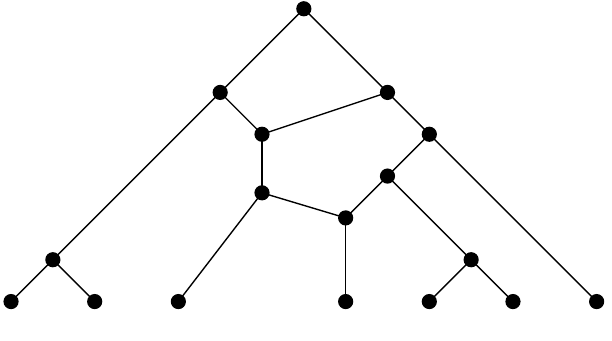_t}
\caption{A phylogenetic network with an equidistant weighting.}
\label{equidistant}
\end{figure}

We now state the two recent results. A weighting $w$ of a phylogenetic network $\cN$ is {\em equidistant} if the length of all paths starting at the root and ending at a leaf are the same length. While a weighting $w$ of a phylogenetic network is a {\em reticulation-pair weighting} if, for each reticulation $v$ in $(\cN, w)$, the two edges directed into $v$ have the same weight. The weightings of the phylogenetic networks shown in Fig.~\ref{weighted} are reticulation-pair weightings. The weighting of the phylogenetic network shown in Fig.~\ref{equidistant} is an equidistant weighting. The following theorems are established in \cite{bor16b} and \cite{bor17}, respectively.

\begin{theorem}
Let $\overline{\cD}$ be the set distance matrix of an equidistant-weighted tree-child network $(\cN, w)$ on $X$. Then, up to equivalence, $(\cN, w)$ is the unique such network realising $\overline{\cD}$, in which case a member of its equivalence class can be found from $\overline{\cD}$ in $O(|X|^4)$ time.
\label{tokac1}
\end{theorem}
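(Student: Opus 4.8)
The plan is to argue by induction on $|X|$, reconstructing $(\cN,w)$ by peeling off one reducible local configuration at a time. The key preliminary observation is that an equidistant weighting endows every vertex $v$ of $\cN$ with a well-defined \emph{height} $h(v)$: since all directed root-to-leaf paths have equal length, a telescoping argument shows that all directed paths between any two fixed vertices have equal length, so $h(v)$, the common length of every directed path from $v$ to a leaf, is well defined, with $h(x)=0$ for each $x\in X$. Consequently every up-down path from $x$ to $y$ with peak $p$ has length exactly $2h(p)$, so
\[
\overline{\cD}_{x,y}=\{\,2h(p): p \text{ is the peak of some up-down path from } x \text{ to } y\,\},
\]
and in particular $\min\overline{\cD}_{x,y}=2\mu(x,y)$, where $\mu(x,y)$ is the least height of a common ancestor of $x$ and $y$ that is joined to both by internally-disjoint directed paths. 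The base cases $|X|\le 2$ are immediate.

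First I would select an ordered pair $(x,y)$ of leaves minimising $\min\overline{\cD}_{x,y}$ and examine the structure of $\cN$ at a common ancestor $p$ of minimum height $\mu(x,y)=\tfrac12\min\overline{\cD}_{x,y}$. Using the tree-child property (which guarantees a cherry or a reticulated cherry somewhere in $\cN$) together with the minimality of $\mu(x,y)$, I would show that, up to symmetry, the local picture is one of a short list of configurations: a \emph{cherry}, where $p$ is a tree vertex that is the parent of both $x$ and $y$, both pendant edges being forced by equidistance to have weight $h(p)$; a \emph{reticulated cherry}, where $p$ is a tree vertex with children $x$ and a reticulation $r$ whose child is $y$; and a bounded number of variants in which $x$ or $y$ is reached through stacked reticulations or shortcuts (which the theorem permits). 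One then has to show that the minimising pair and the configuration present are determined by $\overline{\cD}$ alone: for a cherry on $\{x,y\}$, for instance, one has $\overline{\cD}_{x,v}=\overline{\cD}_{y,v}$ for every $v\in X$, and a refinement of this kind of comparison test to separate all the configurations is the combinatorial heart of the proof.

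Next, for whichever configuration is identified, I would carry out the corresponding reduction to obtain an equidistant-weighted tree-child network $(\cN',w')$ on a taxa set $X'$ with $|X'|<|X|$. For a cherry, identify $x$ and $y$ to a single leaf $z$ and absorb the deleted pendant weight $h(p)$ into the edge entering $z$; one checks this keeps the weighting equidistant, leaves all other distances unchanged, and gives $\overline{\cD}'_{z,v}=\overline{\cD}_{x,v}$, so $\overline{\cD}'$ is just a submatrix of $\overline{\cD}$. For a reticulated cherry, delete the edge from $p$ into $r$ (suppressing the resulting degree-two vertices) and, again via the height interpretation, express $\overline{\cD}'$ in terms of $\overline{\cD}$. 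In each case one verifies (i) $\cN'$ is tree-child and $w'$ equidistant, (ii) $\overline{\cD}'$ depends only on $\overline{\cD}$, and (iii) the reduction is invertible: from any equivalence-class representative of $(\cN',w')$, re-attaching the recorded configuration with the recorded weight(s) yields an equivalence-class representative of $(\cN,w)$ — here one must be careful with the root/outgroup edges, since the re-weighting freedom at the root interacts with step (iii). Combining (i)--(iii) with the inductive hypothesis, that $(\cN',w')$ is the unique equidistant-weighted tree-child network realising $\overline{\cD}'$ up to equivalence, shows $(\cN,w)$ is unique up to equivalence and constructible from $\overline{\cD}$.

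For the running time, each reduction step scans $\overline{\cD}$ to locate a minimising pair and to run the configuration tests, and then updates the matrix; comparing candidate pairs row-by-row costs $O(|X|^3)$ per step, and since at least one leaf is removed per step there are $O(|X|)$ steps, with the expansion phase no more expensive, giving $O(|X|^4)$ overall. The main obstacle I anticipate is precisely the case analysis of the middle paragraphs: a minimum-distance pair need not sit in a clean cherry or reticulated cherry, and reticulations stacked below $p$, together with shortcuts, generate further configurations; one must show the enumeration is exhaustive, that every tree-child network exhibits one of them near a minimum-distance pair, and that the configuration present is unambiguously readable from $\overline{\cD}$ — all while keeping the detection tests within the $O(|X|^4)$ budget and correctly tracking the equivalence under re-weighting at reticulations and the root.
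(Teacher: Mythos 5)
This theorem is not actually proved in the present paper: it is quoted from Bordewich and Tokac \cite{bor16b}, and the paper's own inductive arguments (for Theorems~\ref{child1} and~\ref{child2}) concern normal networks reconstructed from minimum distances. Measured against what such a proof requires, your proposal is an outline whose central step is missing, and you say so yourself: you never establish that a pair minimising $\min\overline{\cD}_{x,y}$ sits in one of a short, exhaustive list of local configurations, nor that the configuration present (cherry versus reticulated cherry, which leaf is the reticulation leaf, and whether the second reticulation edge is a shortcut) is readable from $\overline{\cD}$, nor how the reduced matrix $\overline{\cD}'$ after cutting a reticulated cherry is computed from $\overline{\cD}$. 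That last point is exactly where the work lies: in the paper's minimum-distance setting this is Lemma~\ref{cutting1}, whose $X_t$, $\delta$, $X_\delta$ analysis leans crucially on the absence of shortcuts (the parent $g_t$ is not an ancestor of $s$, and there is a leaf $\ell$ below $g_t$ reached without reticulations). You offer no analogue for the shortcut case, only the phrase ``express $\overline{\cD}'$ in terms of $\overline{\cD}$''.

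More seriously, every test you actually write down uses only $\min\overline{\cD}_{x,y}$, i.e.\ the minimum distance matrix. The paper explicitly remarks that minimum inter-taxa distances cannot determine shortcuts, which is precisely why its Theorems~\ref{child1} and~\ref{child2} are restricted to normal networks. Since Theorem~\ref{tokac1} allows shortcuts, any correct proof must make essential use of the non-minimal entries of $\overline{\cD}$ (for instance to detect a shortcut reticulation edge and to recover distances after cutting), and your proposal never says how these entries enter. Two smaller points: stacked reticulations cannot occur in a tree-child network (the unique child of a reticulation must be a tree vertex or a leaf), so that ``variant'' in your case list is vacuous --- the genuine extra case is the shortcut, which is the one you leave unresolved; and the caution about re-weighting at the root is moot here, since there is no outgroup in this theorem and a nontrivial root re-weighting would destroy equidistance. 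The height/ultrametric observation, the cherry reduction, and the $O(|X|^4)$ accounting are fine, but without the detection and cutting lemmas for tree-child networks the proof does not go through.
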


\begin{theorem}
Let $\cD$ be the multi-set distance matrix with distinguished element $r$ of a reticulation-pair weighted tree-child network $(\cN, w)$ on $X$ with outgroup $r$. Then, up to equivalence, $(\cN, w)$ is the unique such network realising $\cD$, in which case a member of its equivalence class can be found from $\cD$ in time $O(|\cD|^2)$.
\label{tokac2}
\end{theorem}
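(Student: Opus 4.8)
The plan is to prove both the uniqueness and the construction simultaneously by induction on $|X|$, following a ``cherry-picking'' strategy: I will show that $\cD$ itself dictates a reduction of $(\cN,w)$ to a smaller reticulation-pair weighted tree-child network, recurse, and then reverse the reductions. The base cases $|X|\le 2$ are immediate: with an outgroup $r$ there is essentially a single non-root edge together with the two edges $(\rho,r)$ and $(\rho,x)$, and the only freedom is the split of the weight at $\rho$, which is precisely the root re-weighting appearing in the definition of equivalence. For the inductive step I would first establish the structural fact that every tree-child network on $X$ with $|X|\ge 3$ and outgroup $r$ contains, among the leaves in $X\setminus\{r\}$, either a \emph{cherry} $\{x,y\}$ (two leaves with a common parent $p$) or a \emph{reticulated cherry} $(x,y)$ (a leaf $x$ whose parent $p$ is a tree vertex that is also a parent of a reticulation $v$ whose unique child is the leaf $y$); this follows from a short argument taking a leaf in $X\setminus\{r\}$ of maximum depth and inspecting its parent.

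Next I would explain how to recognise, from $\cD$ alone, which configuration occurs and on which pair. In a cherry $\{x,y\}$ with parent $p$, the leaves $x$ and $y$ each have $p$ as their only parent, so every up-down path from $x$ to a leaf $z$ begins $x,p,\dots$ and corresponds, under swapping the first edge, to an up-down path from $y$ to $z$; hence $\cD_{x,z}=\cD_{y,z}+(w(p,x)-w(p,y))$ as multi-sets for all $z\in X\setminus\{x,y\}$, while $\cD_{x,y}$ is the singleton $\{w(p,x)+w(p,y)\}$. One then shows, using the structural lemma, that this ``translate'' condition holds for a pair $\{x,y\}$ \emph{only} when $\{x,y\}$ is a cherry, so that cherries, together with the values $w(p,x),w(p,y)$ obtained by combining $\cD_{x,y}$ with $\min\cD_{x,z}-\min\cD_{y,z}$, are read off unambiguously. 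The cherry reduction replaces $\{x,y\}$ by a single pendant leaf whose distance row is $\cD_{x,z}-w(p,x)$; the result is a smaller reticulation-pair weighted tree-child network realising the new matrix, to which the inductive hypothesis applies, and un-reducing re-attaches $x$ and $y$ with the recovered pendant weights.

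When there is no cherry, the structural lemma supplies a reticulated cherry $(x,y)$ with reticulation $v$, tree-vertex parent $p$ of $v$ (whose other child is $x$) and second parent $q$ of $v$. The reduction deletes the reticulation edge $(p,v)$ and suppresses the two resulting degree-two vertices, lowering the number of reticulations by one while preserving the tree-child property. The key point is that every up-down path from $y$ ascends first through $(v,y)$ and then either through $(p,v)$ -- in which case, below $p$, it behaves like an up-down path from $x$, shifted by $w(v,y)+w(p,v)-w(p,x)$ -- or through $(q,v)$; using the reticulation-pair equality $w(p,v)=w(q,v)$ one can identify inside every entry of $\cD$ the exact sub-multi-set of path lengths that traverse $(p,v)$, delete it, and so recover the distance matrix of the reduced network. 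Applying the inductive hypothesis and then re-attaching the reticulation edge $(p,v)$ reconstructs $(\cN,w)$; since $\cD$ constrains but does not determine the individual weights $w(p,v)=w(q,v)$ and $w(v,y)$, the reconstructed network is unique exactly up to the reticulation re-weighting at $v$, as required.

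The step I expect to be the main obstacle is precisely the reticulated-cherry analysis: proving that $\cD$ flags a reticulated cherry on a \emph{specific} ordered pair $(x,y)$ -- and not some other local configuration, nor a cherry -- and that the contribution of paths through $(p,v)$ can be excised \emph{consistently} across all entries of $\cD$ at once. This requires a careful decomposition of up-down paths that leans on acyclicity and on the tree-child property to guarantee that nothing but the leaf $x$ hangs below $p$ off the branch to $v$, and that every path leaving $y$ first passes through $v$; one must also track the interaction with the equivalence relation so that an arbitrary representative weighting of the reduced network lifts correctly. The remaining accounting is routine: a tree-child network on $X$ admits at most $|X|-1$ cherry reductions and one reduction per reticulation, hence $O(|X|)$ reductions in all, and since each reduction is identified and carried out by a bounded number of passes over $\cD$ and $|X|\le|\cD|$, the total reconstruction time is $O(|\cD|^2)$.
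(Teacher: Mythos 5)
Note first that this paper does not actually prove the statement you were asked about: Theorem~\ref{tokac2} is quoted from \cite{bor17}, and the closest this paper comes to revealing that proof is Lemma~\ref{cherry2} (parts (i) and (iii) of which are credited to \cite{bor17}) and the parallel argument for Theorem~\ref{child2}. Measured against that approach, your proposal follows the same broad cherry-picking induction (find a cherry or reticulated cherry, reduce, recurse, re-attach), but it omits the one ingredient that makes the argument work: the distinguished outgroup $r$. In \cite{bor17} and in Section~5 here, the pair $\{s,t\}$ is not found by scanning for a structural pattern in the rows of $\cD$; it is found by maximising $\cQ_r(s,t)=\frac{1}{2}\{d_{\max}(r,s)+d_{\max}(r,t)-d_{\min}(s,t)\}$, which locates a lowest tree vertex and guarantees (Lemma~\ref{cherry2}(i)) that $\{s,t\}$ is a cherry or reticulated cherry; the cherry/reticulated-cherry dichotomy and the identity of the reticulation leaf are then read off from inequalities involving $d_{\max}(r,\cdot)$ (Lemma~\ref{cherry2}(ii)); and the pendant weights needed to carry out the reduction are recovered as, e.g., $w(p,s)=d_{\max}(r,s)-\cQ_r(s,t)$ via Lemma~\ref{cherry2}(iii). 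Your inductive step never uses distances to $r$ at all, so none of this machinery is available to you.

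That omission is precisely where your proposal has a genuine gap, and you flag it yourself: you assert, but do not prove, that $\cD$ ``flags'' a reticulated cherry on a specific ordered pair $(x,y)$, and that the sub-multi-set of path lengths through the edge $(p,v)$ can be excised consistently. The excision is not merely bookkeeping: the paths from $y$ to $z$ through $(p,v)$ form a translate of $\cD_{x,z}$ by $w(v,y)+w(p,v)-w(p,x)$, and while $w(p,x)+w(p,v)+w(v,y)=\min\cD_{x,y}$ is visible in $\cD$, the individual value $w(p,x)$ is not obviously so without the outgroup (it is exactly what $d_{\max}(r,x)-\cQ_r(x,y)$ supplies). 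Without a determined shift you cannot even say which elements of $\cD_{y,z}$ to delete, and an incorrectly chosen shift could produce a different but internally consistent decomposition, which is fatal for the uniqueness half of the theorem. You would also need to verify your cherry-recognition claim (that the multi-set translate condition holds only for cherries) and handle shortcuts, which tree-child networks permit (e.g.\ $q$ an ancestor of $p$), since your ``local configuration'' reasoning implicitly assumes the clean picture that holds in normal networks. So the skeleton of your induction matches the published strategy, but the core recognition-and-reduction lemma for reticulated cherries is missing, and it is the part of the proof that carries all the weight.
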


Theorems~\ref{tokac1} and~\ref{tokac2} are somewhat surprising given that, in the size of the leaf set, it is possible to have exponentially-many up-down paths connecting leaves in a tree-child network. For example, Fig.~\ref{many} shows a normal network with $2n+1$ leaves $x_1, x_2, \ldots, x_{2n+1}$ in which there are $2^n$ distinct up-down paths connecting $x_1$ and $x_{2n+1}$. Nevertheless, the fact that all such paths are considered is not satisfactory. The next two theorems are the two main results of this paper. They show that, up to shortcuts, we can retain the outcomes of Theorems~\ref{tokac1} and~\ref{tokac2} by knowing only a quadratic number of inter-taxa distances. 

\begin{figure}
\input{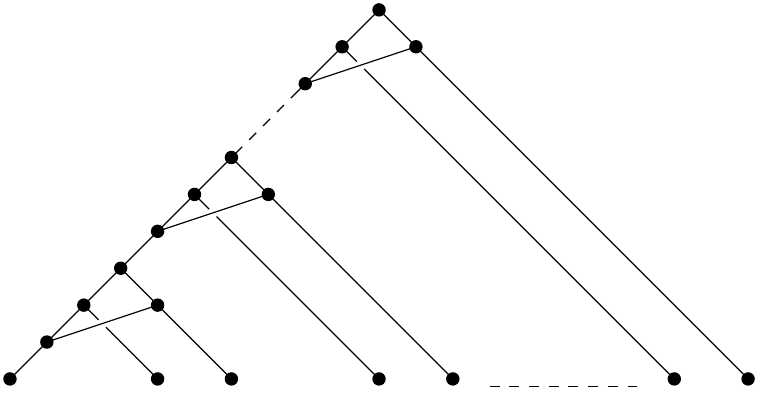_t}
\caption{A normal network on $\{x_1, x_2, \ldots, x_{2n+1}\}$. There are $2^n$ distinct up-down paths connecting $x_1$ and $x_{2n+1}$.}
\label{many}
\end{figure}

Let $(\cN, w)$ be a weighted phylogenetic network on $X$. The {\em minimum distance matrix  $\cD_{\min}$ of $(\cN, w)$} is the $|X|\times |X|$ matrix whose $(x, y)$-th entry is the minimum length of an up-down path joining $x$ and $y$ for all $x, y\in X$. We denote the $(x, y)$-th entry in $\cD_{\min}$ by $d_{\min}(x, y)$.

\begin{theorem}
Let $\cD_{\min}$ be the minimum distance matrix of an equidistant-weighted normal network $(\cN, w)$ on $X$. Then, up to equivalence, $(\cN, w)$ is the unique such network realising $\cD_{\min}$, in which case a member of its equivalence class can be found from $\cD_{\min}$ in $O(|X|^3)$ time.
\label{child1}
\end{theorem}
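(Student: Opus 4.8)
The plan is to prove Theorem~\ref{child1} constructively, mirroring the strategy behind Theorem~\ref{tokac1} but exploiting the additional structure of normal (shortcut-free) networks and the fact that we now only have the \emph{minimum} distances $d_{\min}(x,y)$ rather than the full set $\overline{\cD}_{x,y}$. First I would set up the basic tree-like skeleton. Since $(\cN,w)$ is equidistant, every leaf is at the same height $h$ below the root; the quantity $h - \tfrac12 d_{\min}(x,y)$ should be interpreted as the height of the lowest vertex $v$ such that there is a shortest up-down path from $x$ to $y$ peaking at $v$, i.e.\ a ``most recent common ancestor'' in the sense of the cheapest connecting path. The key combinatorial observation to establish is that $d_{\min}$ behaves enough like an ultrametric on the leaves to let us recover a meaningful hierarchical clustering: for any three leaves $x,y,z$, among the three values $d_{\min}(x,y), d_{\min}(x,z), d_{\min}(y,z)$ the largest is attained at least twice (the ultrametric three-point condition), OR a controlled failure occurs that is itself diagnostic of a reticulation lying ``between'' the relevant leaves. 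I would isolate exactly when this three-point condition can fail in a normal network, and show the failures pinpoint reticulations.

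The second block of the argument is the recursive decomposition. Following the approach used for tree-child networks, I would identify a ``lowest'' reticulation $v$ — one all of whose descendants are leaves or lie below no other reticulation — together with its unique child leaf (or the cherry/structure hanging below it), using the fact that in a \emph{normal} network the parents $p_v, q_v$ of such a reticulation are tree vertices and, crucially, there is no shortcut, so both $p_v \to v$ and $q_v \to v$ are genuinely needed to witness certain shortest distances. The plan is: (a) from $\cD_{\min}$, locate a leaf $x$ whose parent is a reticulation, by looking for a leaf that participates in a three-point violation in the controlled way identified above, or more robustly by recognising that such an $x$ has \emph{two} distinct ``clustering partners'' at its smallest distance level; (b) determine the two attachment points of $x$'s incoming reticulation edges in the already-reconstructed part, again by reading off which pairs of leaves have their shortest path forced through one or the other parent; (c) delete $x$ (and suppress resulting degree-two vertices), obtaining a smaller equidistant normal network $(\cN', w')$ on $X \setminus \{x\}$, argue that its minimum distance matrix is computable from $\cD_{\min}$ (this is the step where I must check that removing $x$ does not create new shortest paths that were previously dominated — here shortcut-freeness and the ``up to equivalence'' slack in the reticulation weights do the work), recurse, then reinsert $x$ with edge weights recovered from the equidistant condition plus the two recorded distances. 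The base case is a single cherry or a two-leaf network, handled by hand.

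For uniqueness, I would argue that at each recursive step the choice of leaf-below-a-reticulation to peel off, its two attachment edges, and the induced smaller matrix are all \emph{forced} by $\cD_{\min}$ — not the particular leaf chosen, but the resulting isomorphism type and weighting up to the equivalence of re-weighting at reticulations and at the root. Combined with the inductive hypothesis that $(\cN',w')$ is the unique equidistant normal network on $X\setminus\{x\}$ realising its minimum distance matrix, this gives uniqueness of $(\cN,w)$ up to equivalence. The running time: each of the $O(|X|)$ peeling steps requires scanning $\cD_{\min}$ (size $O(|X|^2)$) to find the reticulation leaf and its attachments and to update the matrix, for $O(|X|^3)$ overall — matching the claimed bound, provided the update of $\cD_{\min}$ to the matrix of the smaller network is itself doable in $O(|X|^2)$.

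The hard part will be step (b)--(c) of the recursion: showing that the minimum distance matrix of the \emph{contracted} network $\cN'$ is actually recoverable from $\cD_{\min}$, and that $\cN'$ is still normal. The subtlety is that when we suppress degree-two vertices after deleting $x$, an edge that was a shortcut's ``detour'' could in principle become a parallel edge or the network could lose tree-child-ness; and a shortest $y$-to-$z$ path in $\cN'$ might genuinely be shorter than any shortest $y$-to-$z$ path in $\cN$ if that path previously had to avoid reusing the vertex now removed — though since we only track minima and have freedom to re-weight reticulation edges, I expect this to be controllable, it will require a careful case analysis of the local configuration around the peeled reticulation. I would spend the bulk of the proof on a structural lemma guaranteeing that a ``peelable'' reticulation-leaf always exists in a normal network with at least one reticulation and that peeling it preserves normality, with the matrix bookkeeping following as a corollary.
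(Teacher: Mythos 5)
Your plan diverges from the paper's in a way that creates two genuine gaps. First, the structural lemma you say you would devote most of the proof to --- ``a peelable reticulation-leaf always exists in a normal network with at least one reticulation'' --- is false. Take a single reticulation $v$ whose unique child is a tree vertex carrying a cherry of two leaves, with the parents of $v$ attached elsewhere in an otherwise tree-like, shortcut-free way; this is a normal network with a reticulation but no leaf whose parent is a reticulation. The correct structural fact (which the paper uses) is that a normal network on at least two leaves has a cherry or a \emph{reticulated cherry}, and the paper's Lemma~\ref{cherry1} shows that the pair $\{s,t\}$ attaining the global minimum of $d_{\min}$ is always one of these, with the two cases (and which leaf is the reticulation leaf) distinguishable by comparing the rows of $s$ and $t$. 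Your proposed detection via failures of the ultrametric three-point condition is left vague and is not obviously sound; the global-minimum-pair criterion is the precise replacement.

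Second, your recursion deletes the reticulation leaf $x$ together with \emph{both} of its incoming reticulation edges. This makes the matrix update trivial (indeed, no up-down path between other leaves traverses the deleted reticulation, so your worry about new shorter paths is unfounded), but it shifts all the difficulty onto step (b): you must prove that both attachment points of the removed reticulation, and the weights, are \emph{forced} by $\cD_{\min}$, and you give no argument for this. That forcing step is exactly the delicate part --- in the paper it only arises in the reticulation-pair setting (Theorem~\ref{child2}), where it requires the candidate-leaf/common-descendant argument, and even there only one attachment point must be located. The paper's Theorem~\ref{child1} proof avoids the issue entirely by \emph{cutting} just the edge $(p_s,p_t)$ inside the reticulated cherry: both leaves $s$ and $t$ remain in the smaller network (attached at $g_s$ and $g_t$ respectively), so reinsertion amounts to re-adding an edge between the known parents of $s$ and $t$; the price is a nontrivial matrix update (Lemma~\ref{cutting1}, recomputing the row of $t$ via the proxy set $X_\delta$), which is where the real work goes. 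Without either the paper's cutting operation or a proof that your two attachment points are determined by the data, your uniqueness induction does not close.
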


Now let $(\cN, w)$ be a weighted phylogenetic network on $X\cup \{r\}$, where $r$ is an outgroup. For the purposes of the next theorem, the minimum distance matrix $\cD_{\min}$ of $(\cN, w)$ is the $|X|\times |X|$ matrix whose $(x, y)$-th entry is $d_{\min}(x, y)$ for all $x, y\in X$. Furthermore, the {\em maximum distance outgroup vector}, denoted $\bd_{\max}$, is the vector of length $|X|$ whose $x$-th coordinate is the maximum length of an up-down path joining $r$ and $x$ for all $x\in X$. We denote the $x$-th coordinate in $\bd_{\max}$ by $d_{\max}(r, x)$.

\begin{theorem}
Let $\cD_{\min}$ and $\bd_{\max}$ be the minimum distance matrix and maximum distance outgroup vector of a reticulation-pair weighted normal network $(\cN, w)$ on $X\cup \{r\}$, where $r$ is an outgroup. Then, up to equivalence, $(\cN, w)$ is the unique such network realising $\cD_{\min}$ and $\bd_{\max}$, in which case a member of its equivalence class can be found from $\cD_{\min}$ and $\bd_{\max}$ in $O(|X|^3)$ time.
\label{child2}
\end{theorem}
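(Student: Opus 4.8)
The plan is to argue by induction, peeling off one \emph{reducible feature} at a time and reconstructing $(\cN,w)$ up to equivalence from the bottom up; a convenient induction parameter is $|X|+k$, where $k$ is the number of reticulations of $\cN$. For the base case, when $\cN$ has no reticulations, $\cD_{\min}$ is exactly the path-length metric of the underlying tree, so $\cN$ and $w$ are recovered by the Tree-Metric Theorem, with the re-weighting freedom at the root pinned down using that $r$ is the outgroup together with $\bd_{\max}$ (which, for a tree, coincides with the corresponding minimum distances).

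For the inductive step, I would first invoke the structural fact that every normal network on three or more taxa has either a cherry $\{a,b\}$ (two leaves sharing a parent $p$) or a reticulated cherry $(a;b)$ (a leaf $a$ whose parent is a reticulation $u$ one of whose parents, $p_b$, is the parent of a leaf $b$), and that the feature may be chosen with $a\neq r$. It is well known that such a feature exists in any tree-child network (a key point being that no reticulation is the parent of another reticulation), and the no-shortcut hypothesis guarantees that excising it again yields a normal network; in particular the edge $(p_b,u)$ is never a shortcut, and deleting it and suppressing the resulting degree-two vertices creates no new shortcuts.

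The heart of the proof is to show that $\cD_{\min}$ and $\bd_{\max}$ determine which case holds, which leaves are involved, and the weights needed for the reattachment. If $\{a,b\}$ is a cherry with parent $p$, then every up-down path out of $a$ begins with the edge $(p,a)$ and likewise for $b$, so $d_{\min}(a,z)-d_{\min}(b,z)$ equals the constant $w(p,a)-w(p,b)$ for every $z\notin\{a,b\}$; together with $d_{\min}(a,b)=w(p,a)+w(p,b)$ this recovers both edge weights, and one shows conversely that this constancy, with the resulting weights positive (i.e.\ the constant has absolute value less than $d_{\min}(a,b)$), characterises cherries among pairs of leaves. A reticulated cherry $(a;b)$ instead produces a two-valued pattern for $d_{\min}(a,z)-d_{\min}(b,z)$, according to whether a shortest path leaves $u$ towards $p_b$ or towards the other parent of $u$; here $\bd_{\max}$ supplies the extra information that shortest distances alone cannot capture --- intuitively, the size of the detours through reticulations available in a leaf's ancestry --- which pins down the reticulation edge weights and separates genuine reticulated cherries from pairs that merely look alike in $\cD_{\min}$. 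Having identified and weighted the feature, I would delete it (merging $a$ and $b$ into a single leaf in the cherry case; deleting $(p_b,u)$ and suppressing the two degree-two vertices in the reticulated-cherry case), verify that the result $(\cN',w')$ is a weighted normal network on a strictly smaller instance whose minimum distance matrix and maximum distance outgroup vector are the obvious reductions of $\cD_{\min}$ and $\bd_{\max}$, apply the inductive hypothesis to reconstruct $(\cN',w')$ up to equivalence, and reattach the recorded feature with the recovered weights to obtain $(\cN,w)$. Uniqueness up to equivalence follows because the feature and its weights were forced at each step; the $O(|X|)$ reductions, each costing $O(|X|^2)$ to search for and carry out, give the claimed $O(|X|^3)$ running time.

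I expect the main obstacle to lie in the reticulated-cherry case: proving that the combination of the two-valued difference pattern, the positions already forced by the recursion, and the single vector $\bd_{\max}$ genuinely pins down both the reticulation edge weights and the topological attachment of $u$, given that a shortest-path matrix is inherently blind to the longer of the two reticulation edges at a reticulation. A secondary difficulty is keeping the equivalence relation under control: the re-weighting freedoms at reticulations and at the root must be shown to interact cleanly with each reduction, so that the reconstruction is unique up to equivalence at every stage rather than accumulating spurious ambiguity.
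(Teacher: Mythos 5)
There is a genuine gap, and it sits exactly where you flag your ``main obstacle.'' First, your reduction in the reticulated-cherry case is the wrong one for this theorem: you propose deleting $(p_b,u)$, the edge from the tree-vertex parent of $b$ into the reticulation (the paper's ``cutting'' operation). After that deletion, the shortest distances from the reticulation leaf $a$ to all other taxa must route through the \emph{other} parent of $u$, and those lengths are not determined by $\cD_{\min}$ and $\bd_{\max}$: in the original network the recorded value $d_{\min}(a,z)$ may have been achieved via $p_b$, and nothing in the data tells you the alternative via-the-other-parent length. In the equidistant setting (Theorem~\ref{child1}) this is rescued by an ultrametric proxy argument (the sets $X_t$, $X_\delta$ in Lemma~\ref{cutting1}), but you offer no analogue, and none is available for reticulation-pair weightings. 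The paper instead deletes the \emph{other} reticulation edge $(g_t,p_t)$ (``isolating''), leaving $t$ attached below $p_s$, so the reduced distances are simply $d_{\min}(s,x)+\gamma$ with $\gamma=d_{\max}(r,t)-d_{\max}(r,s)$, and $d_{\max}(r,t)$ is unchanged --- this computability is the whole point of the third operation, and it is what your sketch lacks.

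Second, the two steps you leave as ``obstacles'' are precisely the content of the paper's proof, so they cannot be deferred. (i) You need a criterion, computable from the data, that both locates a cherry/reticulated cherry and certifies it as such; the paper gets this by maximising $\cQ_r(s,t)=\frac{1}{2}\{d_{\max}(r,s)+d_{\max}(r,t)-d_{\min}(s,t)\}$ over pairs (Lemma~\ref{cherry2}), which also yields the length of the longest path from $r$ to the tree vertex of the pair --- needed later for the edge weights. Your proposed test (constancy, resp.\ a ``two-valued pattern,'' of $d_{\min}(a,z)-d_{\min}(b,z)$) is asserted, not proved, to characterise these configurations among arbitrary leaf pairs in a network, and you give no rule for which pair to process first. (ii) After isolating, the genuinely hard uniqueness step is showing there is exactly one admissible point in the reduced network at which to reattach the removed reticulation edge; the paper's candidate-leaf argument shows every candidate attachment point is an ancestor of the true one and that attaching at a strict ancestor would strictly increase some minimum distance, forcing the unique common descendant. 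Nothing in your proposal substitutes for this argument, and with your cutting-based reduction the analogous question does not even arise in a form you can answer, because the reduced input is not computable in the first place. The induction scheme, base case, and complexity bookkeeping in your sketch do match the paper, but without these two ingredients the proof does not go through.
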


\noindent It is easily seen that it is not possible to extend Theorems~\ref{child1} and~\ref{child2} to tree-child networks as the distance information given in the hypothesis of these theorems is insufficient to determine shortcuts. However, these results do hold for temporal tree-child networks as such networks have no shortcuts and are therefore normal. For the definition of temporal phylogenetic network, see~\cite{bar06}.

The rest of the paper is organised as follows. The next section contains some necessary preliminaries. In particular, it contains the constructions of the three operations that underlie the inductive proofs of Theorems~\ref{child1} and~\ref{child2}. These proofs, including explicit descriptions of the associated algorithms, are given in Sections~\ref{proof1} and~\ref{proof2}, respectively.

\section{Preliminaries}

Let $\cN$ be a phylogenetic network on $X$ and let $\{s, t\}$ be a $2$-element subset of $X$. Denote the unique parents of $s$ and $t$ by $p_s$ and $p_t$, respectively. We call $\{s, t\}$ a {\em cherry} if $p_s=p_t$, that is, the parents of $s$ and $t$ are the same. Furthermore, we call $\{s, t\}$ a {\em reticulated cherry} if either $p_s$ or $p_t$, say $p_t$, is a reticulation and $p_s$ is a parent of $p_t$, in which case $t$ is the {\em reticulation leaf} of the reticulated cherry. Observe that $p_s$ is necessarily a tree vertex. To illustrate, in Fig.~\ref{weighted}(i), $\{x_1, x_2\}$ is a cherry, while $\{x_3, x_4\}$ is a reticulated cherry in which $x_4$ is the reticulation leaf. In the same figure, $\{x_4, x_5\}$ is also a reticulated cherry. The next lemma is well known for tree-child networks (for example, see~\cite{bor16a}). The restriction to normal networks is immediate. We will used it freely throughout the paper.

\begin{lemma}
Let $\cN$ be a normal network on $X$. Then
\begin{enumerate}[{\rm (i)}]
\item If $|X|=1$, then $\cN$ consists of the single vertex in $X$, while if $|X|=2$, say $X=\{s, t\}$, then $\cN$ consists of the cherry $\{s, t\}$.

\item If $|X|\ge 2$, then $\cN$ has either a cherry or a reticulated cherry.
\end{enumerate}
\end{lemma}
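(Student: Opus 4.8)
The plan is to combine two elementary degree counts with a careful choice of a ``lowest'' tree vertex. First I would fix notation: write $n=|X|$, and let $T$ and $R$ be the numbers of tree vertices and of reticulations of $\cN$. Equating the sum of in-degrees with the sum of out-degrees over all vertices (the root, the $T$ tree vertices, the $R$ reticulations, the $n$ leaves) gives $2+2T+R=T+2R+n$, that is, $T=n+R-2$. Next I would record that the tree-child property yields an injection from the set of non-leaf vertices into the set of tree vertices and leaves: send each non-leaf vertex $v$ to one of its children that is a tree vertex or a leaf (one exists since $\cN$ is tree-child), and note injectivity because a tree vertex or a leaf has in-degree one. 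Hence $1+T+R\le T+n$, so $n\ge R+1$. I would also note, for later use, that a reticulation can never be the parent of a reticulation: a reticulation has out-degree one, so if its child were a reticulation it would violate the tree-child property.

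For part~(i) I would argue by cases on $n$. If $n=1$: a phylogenetic network on a singleton is either the single vertex or satisfies the degree conditions, and in the latter case $n=1$ forces $R=0$ and hence $T=-1$, which is absurd; so $\cN$ is the single vertex. If $n=2$, say $X=\{s,t\}$: then $R\le 1$ and $T=R$. If $R=0$ then $T=0$, so the two children of the root (distinct, as $\cN$ has no parallel edges) both have in-degree one with nothing above them but the root, hence are $s$ and $t$, and $\cN$ is the cherry $\{s,t\}$. If $R=1$, let $u$ be the unique tree vertex and $v$ the unique reticulation; since reticulations are not parents of reticulations, the two parents of $v$ are the root $\rho$ and $u$, and the unique parent of $u$ is $\rho$ or $v$ --- it is not $v$, since $(v,u)$ together with $(u,v)$ would form a directed cycle, so it is $\rho$; but then $\rho\to u\to v$ avoids the edge $(\rho,v)$, making $(\rho,v)$ a shortcut and contradicting that $\cN$ is normal. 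So $R=1$ is impossible, and $\cN$ is the cherry $\{s,t\}$.

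For part~(ii), assume $n\ge 2$. If $\cN$ has no tree vertex, then each child of the root is a leaf or a reticulation, and a reticulation child would have its (non-reticulation, non-leaf) second parent equal to the root as well, giving parallel edges; so both children of the root are leaves and $\cN$ is a cherry. Otherwise I would pick a tree vertex $v$ maximising the length of a longest directed path from the root to $v$; as any strict descendant of $v$ admits a strictly longer such path, no tree vertex is a strict descendant of $v$. Let $a,b$ be the two children of $v$. By the tree-child property at least one of them, say $a$, is a tree vertex or a leaf; since no tree vertex is a strict descendant of $v$, the vertex $a$ is a leaf. If $b$ is also a leaf, then $\{a,b\}$ is a cherry. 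Otherwise $b$ is a reticulation, and its unique child $c$ is, by the tree-child property, a tree vertex or a leaf; since $c$ is a strict descendant of $v$ it is not a tree vertex, so it is a leaf; moreover $c\ne a$ since a leaf has in-degree one, and as $v$ is a parent of the reticulation $p_c=b$, the pair $\{a,c\}$ is a reticulated cherry with reticulation leaf $c$.

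I expect the only genuinely delicate point to be the case $R=1$ of part~(i): excluding a lone reticulation that forms a ``triangle'' with the root is precisely where normality, rather than merely the tree-child property, is needed --- indeed that triangle is itself tree-child. Everything else is bookkeeping with the two counting facts; for part~(ii) the only things to verify are that the chosen vertex $v$ really is a tree vertex (which needs the existence of at least one tree vertex, handled by the separate ``no tree vertex'' case) and that it yields two distinct leaves, both of which are immediate from the degree constraints.
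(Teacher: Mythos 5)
Your proof is correct. Note, however, that the paper does not actually prove this lemma: it cites it as well known for tree-child networks (Bordewich and Semple, \emph{J Math Biol} 73, 2016) and remarks only that ``the restriction to normal networks is immediate.'' So there is no in-paper argument to compare against; what you have supplied is a self-contained elementary proof, and it is sound. Your two counting facts (the degree balance $T=n+R-2$ and the tree-child injection giving $R\le n-1$) cleanly dispose of the degenerate cases, and the ``lowest tree vertex'' argument for part (ii) is essentially the standard proof of the cherry/reticulated-cherry dichotomy for tree-child networks: it uses only the tree-child property, which is exactly why the cited result applies to the larger class. You also correctly isolate where normality (no shortcuts) is genuinely needed, namely the $|X|=2$ case of part (i), where the one-reticulation ``triangle'' is tree-child but not normal; this is precisely the content of the paper's throwaway remark that the restriction to normal networks is immediate. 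Two tiny points you may wish to make explicit: injectivity of your child-choosing map uses that tree vertices and leaves have in-degree one, which you do state; and the claim that the network with both root children leaves ``is a cherry'' implicitly uses that every vertex of a phylogenetic network is reachable from the root, which is part of the standard rooted-DAG convention and worth a half-sentence.
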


\noindent In addition to using the last lemma freely, we will also use freely the following observation. If $\cN$ is a normal network and $u$ is a vertex of $\cN$, then there is a directed path from $u$ to a leaf avoiding reticulations except perhaps $u$.

Let $(\cN, w)$ be a weighted phylogenetic network on $X$. We now describe three operations on $(\cN, w)$. The first and second operations underlie the inductive approach we take to prove Theorem~\ref{child1}, while the first and third operations underlie the inductive approach we take to prove Theorem~\ref{child2}. Let $\{s, t\}$ be  $2$-element subset of $X$, and denote the parents of $s$ and $t$ by $p_s$ and $p_t$, respectively. First suppose that $\{s, t\}$ is a cherry. Let $g_s$ denote the parent of $p_s$. Then {\em reducing $t$} is the operation of deleting $t$ and its incident edge, suppressing $p_s$, and setting the weight of the resulting edge $(g_s, s)$ to be
$$w(g_s, p_s)+w(p_s, s).$$
Now suppose that $\{s, t\}$ is a reticulated cherry, in which $t$ is the reticulation leaf. Let $g_s$ denote the parent of $p_s$, and let $g_t$ denote the parent of $p_t$ that is not $p_s$. Then {\em cutting $\{s, t\}$} is the operation of deleting $(p_s, p_t)$, suppressing $p_s$ and $p_t$, and setting the weight of the resulting edge $(g_s, s)$ to be
$$w(g_s, p_s)+w(p_s, s)$$
and the edge $(g_t, t)$ to be
$$w(g_t, p_t)+w(p_t, t).$$
Lastly, if $g_t$ is a tree vertex, then {\em isolating $\{s, t\}$} is the operation of deleting $(g_t, p_t)$, suppressing $g_t$ and $p_t$, and setting the weight of the resulting edge $(p_s, t)$ to be
$$w(p_s, p_t)+w(p_t, t)$$
and the edge $(g'_t, h)$ to be
$$w(g'_t, g_t)+w(g_t, h),$$
where $g'_t$ is the parent of $g_t$ and $h$ is the child of $g_t$ that is not $p_t$. To illustrate the last two operations, consider Fig.~\ref{cutting}. The weighted phylogenetic network $(\cN_1, w_1)$ has been obtained from the weighted phylogenetic network in Fig.~\ref{equidistant} by cutting $\{x_3, x_4\}$, while $(\cN_2, w_2)$ has been obtained from the same network by isolating $\{x_3, x_4\}$.

\begin{figure}
\input{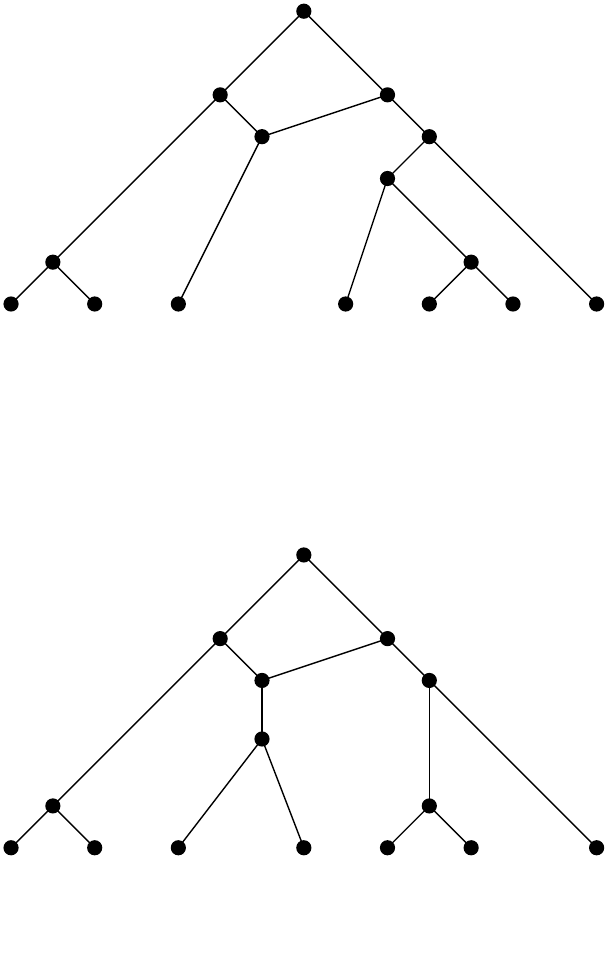_t}
\caption{The weighted phylogenetic networks $(\cN_1, w_1)$ and $(\cN_2, w_2)$ obtained from the weighted phylogenetic network in Fig.~\ref{equidistant} by cutting $\{x_3, x_4\}$ and isolating $\{x_3, x_4\}$, respectively.}
\label{cutting}
\end{figure}

The proof of the next lemma is straightforward and omitted.

\begin{lemma}
Let $(\cN, w)$ be a weighted normal network. Let $\{s, t\}$ be a cherry or a reticulated cherry of $\cN$. If $(\cN', w')$ is obtained from $(\cN, w)$ by reducing $t$ if $\{s, t\}$ is a cherry, or by cutting or isolating $\{s, t\}$ if $\{s, t\}$ is a reticulated cherry, then $\cN$ is a normal network. Furthermore, if $w$ is an equidistant (resp.\ reticulation-pair) weighting, then $w'$ is an equidistant (resp.\ reticulation-pair) weighting.
\label{unchanged}
\end{lemma}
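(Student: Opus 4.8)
The plan is to treat the three operations in turn and, for each, verify the four things the statement quietly asserts: that $\cN'$ is a phylogenetic network, that it is tree-child, that it has no shortcuts, and that the displayed edge weights form a weighting of the same type as $w$. Since each operation deletes only a bounded number of edges and then suppresses the resulting degree-two vertices, all the structural verifications are local, involving only $g_s,p_s$ (reducing), $g_s,p_s,g_t,p_t$ (cutting), and $g_s,p_s,g_t,p_t,g'_t,h$ (isolating), together with $s$ and $t$. Before starting I would record two consequences of $\cN$ being normal and tree-child that get used repeatedly. First, in a reticulated cherry $\{s,t\}$ with reticulation leaf $t$ the vertices $g_s$ and $g_t$ are distinct, since $g_s=g_t$ would make $(g_s,p_t)$ a shortcut; more generally, every ``new'' edge created below has a tail and head that were not already joined by an edge, and inserting it creates no directed cycle, because the contrary would force a shortcut (for instance if $g'_t$ were already a parent of $h$) or would violate the in-degree-one condition on a leaf. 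Second, whenever the isolating operation applies, $g_t$ is a tree vertex, and then its child $h$ other than $p_t$ is a tree vertex or a leaf, since $\cN$ is tree-child and $p_t$ is a reticulation.

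For the structural part I would first observe that deleting the prescribed edge turns $p_t$ into a degree-two vertex in the reticulated-cherry cases and lowers the out-degree of $p_s$ (cutting) or of $g_t$ (isolating) to one, and that suppressing these degree-two vertices simply reroutes edges into $s$, $t$, and $h$ without altering the type of any surviving vertex; with the first observation above this gives that $\cN'$ is a phylogenetic network, acyclicity being automatic because neither deletion nor suppression can create a directed cycle (the only case needing a separate word is $|X|\le 2$ for reducing, where $\cN'$ is the one-vertex network on $\{s\}$). Tree-childness is then immediate: after reducing or cutting, $g_s$ is a parent of the leaf $s$ and $g_t$ a parent of the leaf $t$; after isolating, $p_s$ is a parent of the leaves $s,t$ and $g'_t$ a parent of the tree-vertex-or-leaf $h$; every other vertex keeps the tree-vertex-or-leaf child it already had. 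For the absence of shortcuts I would use a lifting argument. Every edge of $\cN'$ is an edge of $\cN$ or the concatenation of two edges of $\cN$ through a suppressed vertex, so directed paths of $\cN'$ lift to directed paths of $\cN$, and the reticulation set of $\cN'$ is contained in that of $\cN$ (no operation creates a reticulation, while cutting and isolating destroy $p_t$). Hence if $(u,v)$ were a shortcut of $\cN'$ then $v$ would be a reticulation of $\cN$, the edge $(u,v)$ would be an edge of $\cN$ — every new edge points into a leaf or into $h$, never into a reticulation — and lifting the $(u,v)$-avoiding directed path from $u$ to $v$ in $\cN'$ would give such a path in $\cN$ (the only heads among the edges added by the lift are $p_s,p_t,g_t,s,t,h$, none of which can be $v$), contradicting normality of $\cN$.

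Finally I would check the weightings. Each tree edge of $\cN'$ is either an original tree edge, which keeps its positive weight, or a new edge, whose assigned weight is a sum of two weights of $\cN$ at least one of which is a tree-edge weight (the edge into the relevant leaf, or the edge into the tree vertex $g_t$), hence strictly positive; and because no new edge points into a reticulation, no reticulation edge is touched, so reticulation edges retain their non-negative weights. If $w$ is equidistant with common root-to-leaf length $\ell$, then the lift of any root-to-leaf directed path of $\cN'$ is a root-to-leaf directed path of $\cN$ of the same length $\ell$, and every leaf of $\cN'$ is still reachable from the root, so $w'$ is equidistant. If $w$ is a reticulation-pair weighting, then each reticulation of $\cN'$ is a reticulation of $\cN$ other than $p_t$, and its two incoming edges are unmodified edges of $\cN$ (no new edge is a reticulation edge, and the deleted edges $(p_s,p_t)$ and $(g_t,p_t)$ point into $p_t$), so they still carry equal weight and $w'$ is a reticulation-pair weighting. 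I do not expect any deep difficulty; the steps that need genuine care are the parallel-edge and no-cycle checks and the shortcut argument, which are exactly where normality of $\cN$ is used, and the remark that $h$ is a tree vertex or a leaf, which is precisely what keeps every reticulation edge untouched and so lets both kinds of weighting survive.
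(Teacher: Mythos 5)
Your proof is correct. The paper omits the argument for Lemma~\ref{unchanged} as ``straightforward,'' and your write-up supplies exactly the routine verification intended: the local checks that deletion plus suppression preserves the phylogenetic-network and tree-child properties (using that $h$ is a tree vertex or leaf and that a pre-existing edge $(g'_t,h)$ or $g_s=g_t$ would force a shortcut), the lifting of directed paths of $\cN'$ to $\cN$ to rule out new shortcuts and to preserve equidistance, and the observation that no new edge is directed into a reticulation, so reticulation-pair weightings survive; all of these steps are sound as you state them.
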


We next describe three operations on distance matrices that parallel the operations of reducing, cutting, and isolating. Let $\cD$ be a distance matrix on $X$ with each entry consisting of a single value, that is, $\cD$ is an $|X|\times |X|$ matrix whose $(x, y)$-th entry is denoted $d(x, y)$. Let $\{s, t\}$ be a $2$-element subset of $X$. Although the choice of $\{s, t\}$ appears arbitrary, in the sections that follow, $\{s, t\}$ will always be a cherry in the case of the first operation or a reticulated cherry in which $t$ is the reticulation leaf in the case of the second and third operations. Furthermore, in the sections that follow, the operations will always be well defined. First, let $\cD'$ be the distance matrix on $X'=X-\{t\}$ obtained from $\cD$ by setting
$$d'(x, y)=d'(y, x)=d(x, y)$$
for all $x, y\in X'$. We say that $\cD'$ has been obtained from $\cD$ by {\em reducing $t$ in $\cD$}.

For the second operation, let
$$X_t=\{x\in X-\{s, t\}: d(t, x)\neq d(s, x)\}$$
and let $\delta=\min\{d(t, x): x\in X_t\}$. 
Furthermore, let
$$X_{\delta}=\{x\in X_t: d(t, x)=\delta\}.$$
Intuitively, $X_t$ are those leaves whose shortest path to $t$ does not go through the parent of $s$, and $X_\delta$ are those members of $X_t$ at minimum distance from $t$. Now let $\cD'$ be the distance matrix on $X$ obtained from $\cD$ by setting
$$d'(x, y)=d'(y, x)=d(x, y)$$
for all $x, y\in X-\{t\}$, setting $d'(t, t)=0$, and setting
$$d'(t, y)=d'(y, t)=\max\{d(x, y): x\in X_{\delta}-\{y\}\}$$
if $\max\{d(x, y): x\in X_{\delta}-\{y\}\}\ge \delta$, and
$$d'(t, y)=d'(y, t)=\delta$$
otherwise for all $y\in X-\{t\}$. We say that $\cD'$ has been obtained from $\cD$ by {\em cutting $\{s, t\}$ in $\cD$}. Intuitively, elements of $X_\delta$ are being used as a proxy for $t$ in determining the minimum distances from $t$ to members of $X_t-X_\delta$ in $\cD'$; the distance from $t$ to any member of $X_\delta$ remains $\delta$.

For the last operation, let $r$ be a distinguished element in $X$, and let
$$\gamma=d(r, t)-d(r, s).$$
Intuitively, $\gamma$ is the difference in length of the edge from the parent of $s$ to $s$ and the path from the parent of $s$ to $t$. Let $\cD'$ be the distance matrix on $X$ obtained from $\cD$ by setting
$$d'(x, y)=d'(y, x)=d(x, y)$$
for all $x, y\in X-\{t\}$,
$$d'(t, x)=d'(x, t)=d(s, x)+\gamma$$
for all $x\in X-\{s, t\}$, and
$$d'(t, s)=d'(s, t)=d(t, s).$$
We say that $\cD'$ has been obtained from $\cD$ by {\em isolating $\{s, t\}$ in $\cD$}.

\section{Proof of Theorem~\ref{child1}}
\label{proof1}

In this section, we establish Theorem~\ref{child1}. We begin with two lemmas.

\begin{lemma}
Let $(\cN, w)$ be a equidistant-weighted normal network on $X$, where $|X|\ge 2$. Let $\cD_{\min}$ be the minimum distance matrix of $(\cN, w)$, and let $\{s, t\}$ be a $2$-element subset of $X$ such that
$$d_{\min}(s, t)=\min\{d_{\min}(x, y): x, y\in X\}.$$
Then $\{s, t\}$ is either a cherry or a reticulated cherry of $(\cN, w)$. Moreover,
\begin{enumerate}[{\rm (i)}]
\item The set $\{s, t\}$ is a cherry of $(\cN, w)$ if $d_{\min}(s, x)=d_{\min}(t, x)$ for all $x\in X-\{s, t\}$.

\item Otherwise, $\{s, t\}$ is a reticulated cherry of $(\cN, w)$ in which $t$ is the reticulation leaf precisely if $d_{\min}(s, x) > d_{\min}(t, x)$ for some $x\in X-\{s, t\}$.
\end{enumerate}
\label{cherry1}
\end{lemma}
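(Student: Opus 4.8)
The plan is to turn the equidistant weighting into a height function and to read minimum distances off it. Since every directed path from the root to a leaf has the same length $h$, a short argument shows that for each vertex $v$ every directed path from the root to $v$ also has a common length, say $t(v)$; hence $w(u,v)=t(v)-t(u)$ for every edge $(u,v)$, and $t$ is strictly increasing along tree edges (and weakly increasing along all edges). A direct computation then gives that an up-down path with peak $q$ has length exactly $2\bigl(h-t(q)\bigr)$, so for distinct leaves $x,y$ we have $d_{\min}(x,y)=2\bigl(h-m(x,y)\bigr)$, where $m(x,y)$ is the maximum of $t(q)$ over all vertices $q$ having directed paths to both $x$ and $y$ --- equivalently, over all peaks of up-down paths from $x$ to $y$, since at a deepest such common ancestor the two descending paths can be taken internally disjoint. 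Thus $\{s,t\}$ minimises $d_{\min}$ exactly when $m(s,t)$ is maximal over all pairs; fixing a deepest common ancestor $p$ of $s$ and $t$, this says that $t(p)$ is maximal among the heights of all \emph{branching vertices}, that is, vertices that are common ancestors of two distinct leaves.

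To prove that $\{s,t\}$ is a cherry or a reticulated cherry, I would first locate $p$. As $s\neq t$ and $p$ has internally disjoint directed paths to $s$ and $t$, it has out-degree at least two, so $p$ is a tree vertex or the root; the case $|X|=2$ is immediate from the lemma on the structure of normal networks, and if $|X|\ge 3$ that lemma supplies a cherry or reticulated cherry whose relevant parent is a tree vertex of strictly positive height, which via the distance formula and the minimality of $d_{\min}(s,t)$ forces $p$ to be a tree vertex, not the root. I would then use the fact that in a tree-child network every tree vertex has at least two descendant leaves, so a tree vertex strictly below $p$ would be a branching vertex of height exceeding $t(p)$ --- impossible. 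Walking one or two steps down from $p$ along the internally disjoint paths towards $s$ and towards $t$, and invoking this together with the tree-child property (a reticulation's child is a tree vertex or a leaf, and $p$ itself has a child that is a tree vertex or a leaf), I would conclude that the parent of $s$ is $p$ or a reticulation child of $p$, that the same holds for $t$, and that $p$ cannot have two reticulation children. The surviving configurations are precisely: $\{s,t\}$ is a cherry with parent $p$, or a reticulated cherry with $t$ (respectively $s$) the reticulation leaf.

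For (i) and (ii) I would compare these three configurations. If $\{s,t\}$ is a cherry with parent $p$, the ancestor sets of $s$ and of $t$ coincide (each is $\{p\}$ together with the ancestors of $p$), so $m(s,x)=m(t,x)$ and hence $d_{\min}(s,x)=d_{\min}(t,x)$ for all $x$. If $\{s,t\}$ is a reticulated cherry with $t$ the reticulation leaf, then the unique parent $p_s$ of $s$ is also a parent of $t$'s parent $p_t$, so every ancestor of $s$ is an ancestor of $t$; hence every common ancestor of $s$ and $x$ is a common ancestor of $t$ and $x$, giving $d_{\min}(s,x)\ge d_{\min}(t,x)$ for all $x$. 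To get strict inequality for some $x$, let $g_t$ be the parent of $p_t$ other than $p_s$; the tree-child property makes $g_t$ a tree vertex and its child $h\neq p_t$ a tree vertex or leaf, and the absence of shortcuts makes $p_s$ and $g_t$ incomparable. Taking $x$ to be a leaf reached from $h$ by a reticulation-free path gives $x\notin\{s,t\}$, with $g_t$ a common ancestor of $t$ and $x$; moreover, since the descendant leaves of $p_s$ are exactly $\{s,t\}$, every common ancestor of $s$ and $x$ is a strict ancestor of both $p_s$ and $g_t$, and such a vertex has height strictly below $t(g_t)$ because the edge entering the tree vertex $g_t$ has positive weight. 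Hence $m(t,x)>m(s,x)$, i.e.\ $d_{\min}(s,x)>d_{\min}(t,x)$. The case with $s$ the reticulation leaf is symmetric, and since the three cases are exhaustive with pairwise incompatible conclusions, the equality condition characterises the cherry (part (i)) and the strict-inequality condition characterises the reticulated cherry with $t$ the reticulation leaf (part (ii)).

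I expect the main obstacle to be the second paragraph --- specifically the claim that no tree vertex lies strictly below $p$. This reduces to the statement that every tree vertex of a tree-child network has at least two descendant leaves, which I would prove separately using the displayed observation (from any vertex there is a directed path to a leaf avoiding reticulations except perhaps that vertex) together with acyclicity, ruling out the possibility that the two such paths issuing from a tree vertex's two children meet at a common leaf.
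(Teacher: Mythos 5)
Your proposal is correct, and it takes a noticeably different route from the paper's proof. The paper argues locally and upward from the leaves: it considers the parents $p_s$ and $p_t$ directly, rules out both being reticulations and rules out the sibling of $s$ being a tree vertex by exhibiting, in each case, a pair of leaves strictly closer than $\{s,t\}$, and then concludes the cherry/reticulated-cherry dichotomy; parts (i) and (ii) are dispatched as ``easily checked''. You instead formalise the equidistant weighting as a height function $t(\cdot)$, prove the identity $d_{\min}(x,y)=2\bigl(h-m(x,y)\bigr)$ with $m$ the maximal height of a common ancestor, identify the peak $p$ of the minimising pair as a globally deepest branching vertex, show no tree vertex lies strictly below $p$ (via the fact that every tree vertex of a tree-child network has two distinct descendant leaves), and enumerate the possible configurations of $p$'s children; you then prove (i) and (ii) by an explicit comparison of ancestor sets. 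What your approach buys is a genuine proof of (i)/(ii) and a precise justification of the steps the paper labels ``easily seen''; what it costs is the extra machinery (the height function, the equivalence of ``maximal-height common ancestor'' with ``peak of a shortest up-down path'', and the two-descendant-leaves claim), all of which you handle correctly. Two small elisions are worth making explicit when writing this up: first, when you assert that every common ancestor of $s$ and $x$ is a strict ancestor of $g_t$, the justification is not the clause about descendant leaves of $p_s$ but rather your choice of $x$ along a reticulation-free path (so every vertex on that path below $g_t$ has in-degree one, forcing any ancestor of $x$ outside the path to pass through $g_t$) combined with the incomparability of $p_s$ and $g_t$; second, when you claim the tree vertex of the cherry or reticulated cherry supplied by the structure lemma has strictly positive height for $|X|\ge 3$, the reticulated-cherry case needs the no-shortcut property to rule out that tree vertex being the root (the same property also guarantees $g_t$ is not the root, so that the edge entering it exists and is a tree edge). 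Both points follow in a line or two from ingredients you already set up.
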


\begin{proof}
Let $p_s$ and $p_t$ denote the parents of $s$ and $t$, respectively. If $p_s$ and $p_t$ are both reticulations, then, as $\cN$ is normal and $w$ equidistant, it is easily seen that there is an element $x\in X-\{s, t\}$ such that either $d_{\min}(s, t) > d_{\min}(s, x)$ or $d_{\min}(s, t) > d_{\min}(t, x)$; a contradiction ($x$ is a descendant of a tree vertex on the shortest up-down path from $s$ to $t$ that is not the peak of that up-down path). Thus, either $p_s$ or $p_t$ is a tree vertex. Without loss of generality, we may assume that $p_s$ is a tree vertex. Let $u$ denote the child of $p_s$ that is not $s$. If $u$ is a tree vertex, then, as $w$ is equidistant,
$$d_{\min}(s, t) > d_{\min}(a, b) \geq \min\{d_{\min}(x, y): x, y\in X\},$$
where $\{a,b\}$ is a cherry or reticulated cherry and $a, b$ are descendants of $u$; a contradiction. Therefore either $u$ is a leaf or a reticulation. If $u$ is a leaf, then, as $w$ is equidistant, $u=t$, in which case, $\{s, t\}$ is a cherry. If $u$ is a reticulation, then, as $\cN$ is normal and
$$d_{\min}(s, t) = \min\{d_{\min}(x, y): x, y\in X\},$$
it follows that the unique child of $u$ is $t$, and so $\{s, t\}$ is a reticulated cherry. Since $\cN$ has no shortcuts and $w$ is equidistant, it is easily checked that $\{s, t\}$ is a reticulated cherry in which $t$ is the reticulation leaf if and only if $d(s, t) > d(t, x)$ for some $x\in X-\{s, t\}$. The lemma immediately follows.
\end{proof}

\begin{lemma}
Let $(\cN, w)$ be an equidistant-weighted normal network on $X$, where $|X|\ge 2$. Let $\cD_{\min}$ be the minimum distance matrix of $(\cN, w)$, and let $\{s, t\}$ be a $2$-element subset of $X$ such that
$$d_{\min}(s, t)=\min\{d_{\min}(x, y): x, y\in X\}.$$
Then the distance matrix obtained from $\cD_{\min}$ by reducing $t$, if $\{s, t\}$ is a cherry, and by cutting $\{s, t\}$, if $\{s, t\}$ is a reticulated cherry in which $t$ is the reticulation leaf, is the minimum distance matrix $\cD'_{\min}$ realised by the weighted network $(\cN', w')$ obtained from $(\cN, w)$ by reducing $t$, if $\{s, t\}$ is a cherry, and cutting $\{s, t\}$, if $\{s, t\}$ is a reticulated cherry in which $t$ is the reticulation leaf.
\label{cutting1}
\end{lemma}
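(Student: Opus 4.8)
The plan is to split on whether $\{s,t\}$ is a cherry or a reticulated cherry with reticulation leaf $t$. The cherry case is routine: writing $p=p_s=p_t$ for the common parent and $g$ for its parent, the children of $p$ are the leaves $s$ and $t$, so any up-down path between two leaves of $X-\{t\}$ either avoids $p$ or has an endpoint $s$, in which case it traverses $(g,p)$ then $(p,s)$; suppressing $p$ with new edge weight $w(g,p)+w(p,s)$ is therefore a length-preserving bijection between the up-down paths of $(\cN,w)$ inside $X-\{t\}$ and those of $(\cN',w')$, so $d'_{\min}$ agrees with $d_{\min}$ on $X-\{t\}$, which is exactly ``reducing $t$ in $\cD_{\min}$''.

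For the reticulated cherry, let $p_t$ be the reticulation, $p_s,g_t$ its parents ($p_s$ a tree vertex, $g_s$ its parent), and $h$ the child of $g_t$ other than $p_t$. First note two structural facts: $g_t$ is a tree vertex (tree-childness applied to $g_t$, since $p_t$ is a reticulation), and $p_s,g_t$ are incomparable (otherwise one of the two edges into $p_t$ is a shortcut, contradicting normality). Next I would record the standard properties of an equidistant normal network $(\cN,w)$: every vertex $v$ has a well-defined height $h(v)$, the common length of all root-to-$v$ directed paths; every directed path from $v$ to a leaf has length $H-h(v)$ for $H$ the common root-to-leaf length; and the minimum length of an up-down path between two leaves $x,y$ is $2(H-h(\ell))$, with $\ell$ a deepest common ancestor of $x,y$. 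Equidistance at the children of $p_s$ gives $w(p_s,s)=w(p_s,p_t)+w(p_t,t)$, and since $\cN'$ is again equidistant with the same $H$ (the root-to-$g_t$ paths are untouched by the cut), the new edge $(g_t,t)$ has weight $w':=w(g_t,p_t)+w(p_t,t)=H-h(g_t)$.

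Then I would proceed in two steps. (1) As in the cherry case, no up-down path between two leaves of $X-\{t\}$ uses the deleted edge $(p_s,p_t)$ (it would have to reach $t$, the only leaf below $p_t$), so cutting leaves all those distances unchanged, matching the matrix operation there. (2) In $\cN'$ the leaf $t$ hangs below $g_t$ by an edge of weight $w'$, so $d'_{\min}(t,y)$ equals $B(y)$, the minimum length over up-down paths from $t$ to $y$ in $(\cN,w)$ that leave $p_t$ on the $g_t$-side; using $w(p_s,s)=w(p_s,p_t)+w(p_t,t)$ one checks that the up-down paths from $t$ leaving $p_t$ on the $p_s$-side are in length-preserving bijection with the up-down paths from $s$, so $d_{\min}(t,x)=\min\{d_{\min}(s,x),B(x)\}$ for $x\in X-\{s,t\}$. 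Hence $X_t=\{x:B(x)<d_{\min}(s,x)\}$ and $d_{\min}(t,x)=B(x)$ on $X_t$. Computing with heights, $B(x)=2(H-h(g_t))$ if $x$ descends from $g_t$ (necessarily via $h$) and $B(x)=2(H-h(m))>2(H-h(g_t))$ otherwise, with $m$ a deepest common ancestor of $g_t$ and $x$. A reticulation-free descending path from $g_t$ reaches a leaf $z$ whose ancestors are exactly the ancestors of $g_t$ together with the spine down to $z$; incomparability of $p_s,g_t$ then forces $d_{\min}(s,z)>2(H-h(g_t))$, so $z\in X_t$. Therefore $\delta=2(H-h(g_t))$, the set $X_\delta$ is exactly the set of members of $X_t$ descending from $g_t$, and $z\in X_\delta$.

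Finally I would verify the formula for $d'(t,y)$. If $y$ descends from $g_t$, then $B(y)=\delta$ and every $x\in X_\delta$ has a common ancestor with $y$ strictly below $g_t$, so $d_{\min}(x,y)<\delta$ and the formula returns $\delta=d'_{\min}(t,y)$. If $y$ does not descend from $g_t$, then $B(y)=2(H-h(m))\ge\delta$ with $m$ a deepest common ancestor of $g_t$ and $y$; every $x\in X_\delta$ has $d_{\min}(x,y)\le 2(H-h(m))$ since $m$ is a common ancestor of $x$ and $y$, while $x=z$ attains equality because the common ancestors of $z$ and $y$ coincide with those of $g_t$ and $y$, so the formula returns $B(y)=d'_{\min}(t,y)$. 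The same computations (also for $y=s$ and for $y\in X-\{s,t\}$ with $y\notin X_t$) show the operation is well defined and correct on every entry. I expect the main obstacle to be exactly this structural analysis — locating the peaks of the relevant shortest paths and showing that the reticulation-free leaf $z$ both lies in $X_\delta$ and realises the required maximum — and the absence of shortcuts is used essentially here, in line with the remark that the statement fails for general tree-child networks.
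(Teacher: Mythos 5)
Your proof is correct and follows essentially the same route as the paper's: reduce the problem to verifying the row of $t$, use a reticulation-free directed path from $g_t$ to a leaf $z$ (the paper's $\ell$), together with the positive tree edge into $g_t$ and the absence of shortcuts, to show $z\in X_\delta$ and that $X_\delta$ consists of descendants of $g_t$, and then verify the $\max$-versus-$\delta$ dichotomy according to whether $y$ is a descendant of $g_t$. The difference is only presentational: you make explicit, via heights and deepest common ancestors (and the bijection between paths through $p_s$ from $t$ and from $s$), the equidistance computations that the paper dispatches as ``easily seen.''
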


\begin{proof}
By Lemma~\ref{cherry1}, $\{s, t\}$ is either a cherry or a reticulated cherry. Furthermore, by the same lemma, if $\{s, t\}$ is a reticulated cherry, we may assume, without loss of generality, that $t$ is the reticulation leaf. Also, by Lemma~\ref{unchanged}, $(\cN', w')$ is an equidistant-weighted normal network regardless of which of the two stated ways it is obtained from $(\cN, w)$. If $\{s, t\}$ is a cherry, then it is clear that the distance matrix obtained from $\cD_{\min}$ by reducing $t$ is the minimum distance matrix of $(\cN', w')$. Therefore, suppose that $\{s, t\}$ is a reticulated cherry, in which case $(\cN', w')$ is obtained from $(\cN, w)$ by cutting $\{s, t\}$. Let $\cD'$ be the distance matrix obtained from $\cD_{\min}$ by cutting $\{s, t\}$. We will show that $\cD'$ is the minimum distance matrix $\cD'_{\min}$ of $(\cN', w')$.

Let $p_s$ and $p_t$ denote the parents of $s$ and $t$, respectively, in $(\cN, w)$. Since the only up-down paths in $(\cN, w)$ joining elements in $X$ that traverse the edge $(p_s, p_t)$ involve $t$, it follows that $d'(x, y)=d'_{\min}(x, y)$ for all $x, y\in X-\{t\}$. Thus, to complete the proof, it suffices to show that $d'(t, x)=d'_{\min}(t, x)$ for all $x\in X-\{t\}$.

Let $g_t$ denote the parent of $p_t$ that is not $p_s$. Since $\cN$ has no shortcuts, $g_t$ is not an ancestor of $s$. Therefore, as $\cN$ is normal, there is a (directed) path from $g_t$ to a leaf, $\ell$ say, containing no reticulations, where $\ell\not\in \{s, t\}$. Note that, in what follows, we never determine $\ell$ but its existence underlies the rest of the proof.

Let
$$X_t=\{x\in X-\{s, t\}: d_{\min}(t, x)\neq d_{\min}(s, x)\}.$$
Thus, if $x\in X_t$, then every minimum length up-down path in $(\cN, w)$ joining $t$ and $x$ must traverse the edge $(g_t, p_t)$. Observe that $\ell\in X_t$ as the edge directed into $g_t$, which is a tree edge, has positive weight and every up-down path from $s$ to $\ell$ traverses this edge and therefore $g_t$, so $d_{\min}(t, \ell) < d_{\min}(s, \ell)$. Now let
$$\delta=\min\{d_{\min}(t, x): x\in X_t\}$$
and let $X_{\delta}$ denote the subset of $X_t$ consisting of those elements $x$ such that $d_{\min}(t, x)=\delta$, that is,
$$X_{\delta}=\{x\in X_t: d_{\min}(t, x)=\delta\}.$$
Again, observe that $\ell\in X_{\delta}$, and so the elements in $X_{\delta}$ are descendants of $g_t$.

Let $y\in X-\{t\}$. We next determine whether or not $y$ is a descendant of $g_t$. First note that $|X_{\delta}|=1$ if and only if $g_t$ is the parent of a leaf, in which case $\ell$ is the only leaf apart from $t$ that is a descendant of $g_t$. So assume $|X_{\delta}|\ge 2$. We establish two claims:
\begin{enumerate}[(i)]
\item If
$$\max\{d_{\min}(x, y): x\in X_{\delta}-\{y\}\}\ge \delta,$$
then $y$ is not a descendant of $g_t$.

\item If
$$\max\{d_{\min}(x, y): x\in X_{\delta}-\{y\}\} < \delta,$$
then $y$ is a descendant of $g_t$.
\end{enumerate}
To see (i) and (ii), if $y$ is a descendant of $g_t$, then $d_{\min}(x, y) < \delta$ for all $x\in X_{\delta}-\{y\}$, so
$$\max\{d_{\min}(x, y): x\in X_{\delta}-\{y\}\} < \delta.$$
On the other hand, if $y$ is not a descendant of $g_t$, then
$$\max\{d_{\min}(x, y): x\in X_{\delta}-\{y\}\}\ge d_{\min}(\ell, y)\ge d_{\min}(t, \ell) = \delta.$$

It follows from (i) and (ii) that, for all $y\in X-\{t\}$, if $y$ is not a descendant of $g_t$, then
$$d'_{\min}(t, y)=\max\{d_{\min}(x, y): x\in X_{\delta}-\{y\}\},$$
otherwise
$$d'_{\min}(t, y)=\delta.$$
Hence $d'(t, x)=d'_{\min}(t, x)$ for all $x\in X-\{t\}$, thereby completing the proof of the lemma.
\end{proof}

We next prove the uniqueness part of Theorem~\ref{child1}.

\begin{proof}[Proof of the uniqueness part of Theorem~\ref{child1}]
The proof is by induction on the sum of the number $n$ of leaves and the number $k$ of reticulations in $(\cN, w)$. If $n+k=1$, then $n=1$ and $k=0$, and $(\cN, w)$ consists of the single vertex in $X$, and so uniqueness holds. If $n+k=2$, then, as $\cN$ is normal, $n=2$ and $k=0$, and $(\cN, w)$ consists of two leaves attached to the root. Again, uniqueness holds as $w$ is equidistant and so the weights of the edges incident with the leaves is fixed. Now suppose that $n+k\ge 3$, so $n\ge 2$, and that the uniqueness holds for all equidistant-weighted normal networks for which the sum of the number of leaves and the number of reticulations is at most $n+k-1$.

Let $\{s, t\}$ be a $2$-element subset of $X$ such that
$$d_{\min}(s, t)=\min\{d_{\min}(x, y): x, y\in X\}.$$
By Lemma~\ref{cherry1}, $\{s, t\}$ is either a cherry or a reticulated cherry of $(\cN, w)$. If $\{s, t\}$ is a reticulated cherry, then, by the same lemma, we can determine from $\cD_{\min}$ which of $s$ and $t$ is the reticulation leaf. Thus, without loss of generality, we may assume that $t$ is the reticulation leaf. Depending on whether $\{s, t\}$ is a cherry or a reticulated cherry, let $(\cN', w')$ and $\cD'$ be the weighted network and distance matrix obtained from $(\cN, w)$ and $\cD_{\min}$ by reducing $t$ or cutting $\{s, t\}$, respectively. By Lemma~\ref{cutting1}, $\cD'$ is the minimum distance matrix of $(\cN', w')$. Since $(\cN', w')$ either has $n-1$ leaves and $k$ reticulations if $\{s, t\}$ is a cherry, or $n$ leaves and $k-1$ reticulations if $\{s, t\}$ is a reticulated cherry, it follows by the induction assumption that, up to equivalence, $(\cN', w')$ is the unique equidistant-weighted normal network with minimum distance matrix $\cD'$.

Let $(\cN_1, w_1)$ be an equidistant-weighted normal network on $X$ with minimum distance matrix $\cD_{\min}$. By Lemma~\ref{cherry1}, $\{s, t\}$ is either a cherry or a reticulated cherry in $(\cN_1, w_1)$. Indeed, by the same lemma, $\{s, t\}$ is a cherry in $(\cN_1, w_1)$ precisely if it is a cherry in $(\cN, w)$. First assume that $\{s, t\}$ is a cherry in $(\cN, w)$. Then $\{s, t\}$ is a cherry in $(\cN_1, w_1)$. Let $(\cN'_1, w'_1)$ be the equidistant-weighted normal network obtained from $(\cN_1, w_1)$ by reducing $t$. By Lemma~\ref{cutting1}, $\cD'$ is the minimum distance matrix of $(\cN'_1, w'_1)$ and so, by the induction assumption, $(\cN'_1, w'_1)$ and $(\cN', w')$ are equivalent. Using this equivalence and considering $d_{\min}(s, t)$, it is easily seen that $(\cN_1, w_1)$ and $(\cN, w)$ are equivalent.

Now assume that $\{s, t\}$ is a reticulated cherry in $(\cN, w)$. Then $\{s, t\}$ is a reticulated cherry in $(\cN_1, w_1)$ where, by Lemma~\ref{cherry1}, $t$ is a reticulation leaf. Let $(\cN'_1, w'_1)$ be the equidistant-weighted normal network obtained from $(\cN_1, w_1)$ by cutting $\{s, t\}$. Since $\cD_{\min}$ is the minimum distance matrix of $(\cN_1, w_1)$, it follows by Lemma~\ref{cutting1} that $\cD'$ is the minimum distance matrix of $(\cN'_1, w'_1)$. Therefore, by the induction assumption, $(\cN'_1, w'_1)$ and $(\cN', w')$ are equivalent. By again considering $d_{\min}(s, t)$, it is now easily deduced that $(\cN_1, w_1)$ and $(\cN, w)$ are equivalent. This completes the proof of the uniqueness part of the theorem.
\end{proof}

\subsection{The Algorithm}

Let $(\cN, w)$ be an equidistant-weighted normal network on $X$ and let $\cD_{\min}$ denote the minimum distance matrix of $(\cN, w)$. We next give an algorithm which takes as input $X$ and $\cD_{\min}$, and returns a weighted network $(\cN_0, w_0)$ equivalent to $(\cN, w)$. Its correctness is essentially established in proving the uniqueness part of Theorem~\ref{child1} and so a formal proof of this is omitted. However, its running time is given at the end of this section. Called {\sc Equidistant Normal}, the algorithm works as follows.
\begin{enumerate}[1.]
\item If $|X|=1$, then return the weighted phylogenetic network consisting of the single vertex in $X$.

\item If $|X|=2$, say $X=\{s, t\}$, then return the weighted phylogenetic network with exactly two leaves $s$ and $t$ adjoined to the root by edges each with weight $\frac{1}{2}d_{\min}(s, t)$.

\item Else, find a $2$-element subset $\{s, t\}$ of $X$ such that
$$d_{\min}(s, t)=\min\{d_{\min}(x, y): x, y\in X\}.$$
\begin{enumerate}
\item If $d_{\min}(s, x)=d_{\min}(t, x)$ for all $x, y\in X$ (so $\{s, t\}$ is a cherry), then
\begin{enumerate}[(i)]
\item Reduce $t$ in $\cD_{\min}$ to give the distance matrix $\cD'$ on $X'=X-\{t\}$.

\item Apply {\sc Equidistant Normal} to input $X'$ and $\cD'$. Construct $(\cN_0, w_0)$ from the returned weighted phylogenetic network $(\cN'_0, w'_0)$ on $X'$ as follows. If $p'_s$ is the parent of $s$ in $(\cN'_0, w'_0)$, then subdivide $(p'_s, s)$ with a new vertex $p_s$, adjoin a new leaf $t$ to $p_s$ via the new edge $(p_s, t)$, and set
$$w_0(p_s, s)=w_0(p_s, t)={\textstyle \frac{1}{2}}d_{\min}(s, t)$$
and $w_0(p'_s, p_s)=w'_0(p'_s, s)-\frac{1}{2}d_{\min}(s, t)$. Keeping all other edge weights the same as their counterparts in $(\cN'_0, w'_0)$, return $(\cN_0, w_0)$.
\end{enumerate}

\item Else ($\{s, t\}$ is a reticulated cherry, in which case, $t$ is the reticulation leaf if there exists an $x\in X-\{s, t\}$ such that $d_{\min}(t, x) < d_{\min}(s, x)$),
\begin{enumerate}[(i)]
\item Cut $\{s, t\}$ in $\cD_{\min}$ to give the distance matrix $\cD'$ on $X$.

\item Apply {\sc Equidistant Normal} to input $X$ and $\cD'$. Construct $(\cN_0, w_0)$ from the returned weighted phylogenetic network $(\cN'_0, w'_0)$ on $X$ as follows. If $p'_s$ and $p'_t$ denote the parents of $s$ and $t$ in $(\cN'_0, w'_0)$, respectively, then subdivide $(p'_s, s)$ and $(p'_t, t)$ with new vertices $p_s$ and $p_t$, respectively, adjoin $p_s$ and $p_t$ via the new edge $(p_s, p_t)$, set $w_0(p_s, s)=\frac{1}{2}d_{\min}(s, t)$ and $w_0(p'_s, p_s)=w'_0(p'_s, s)-\frac{1}{2}d_{\min}(s, t)$, and, for some positive real value $\omega$ such that $\omega\le \frac{1}{2}d_{\min}(s, t)$ and $\omega\le w'_0(p'_t, t)$, set $w_0(p_t, t)=\omega$, $w_0(p_s, p_t)=\frac{1}{2}d_{\min}(s, t)-\omega$, and $w_0(p'_t, p_t)=w'_0(p'_t, t)-\omega$. Keeping all other edge weights the same as their counterparts in $(\cN'_0, w'_0)$, return $(\cN_0, w_0)$.
\end{enumerate}
\end{enumerate}
\end{enumerate}

We now consider the running time of {\sc Equidistant Normal}. The algorithm takes as input a set $X$ and an $|X|\times |X|$ distance matrix $\cD_{\min}$ whose entries are the minimum length of an up-down path joining elements in $X$ of an equidistant-weighted normal network $(\cN, w)$ on $X$. Unless $|X|\in \{1, 2\}$, in which case {\sc Equidistant Normal} runs in constant time, each iteration starts by finding a $2$-element subset $\{s, t\}$ of $X$ such that
$$d_{\min}(s, t)=\min\{d_{\min}(x, y): x, y\in X\}.$$
This takes $O(|X|^2)$ time. Once such a $2$-element subset is found, we compute $\cD'$. This computation is done in one of two ways depending on whether or not
$$d_{\min}(s, x)=d_{\min}(t, x)$$
for all $x\in X-\{s, t\}$. If, for some $x$,
$$d_{\min}(s, x)\neq d_{\min}(t, x),$$
we need to additionally check which of $d_{\min}(s, x) < d_{\min}(t, x)$ and $d_{\min}(s, x) > d_{\min}(t, x)$ hold. Thus the determination of which way to compute $\cD'$ can be done in $O(|X|)$ time. Regardless of the way, $\cD'$ can be computed in $O(|X|)$ time. Once $(\cN'_0, w'_0)$ is returned, it can be augmented to $(\cN_0, w_0)$ in constant time. Hence the total time of each iteration is $O(|X|^2)$ time.

When we recurse, the distance matrix $\cD'$ inputted to the recursive call is the minimum distance matrix of a normal network with either one less leaf or one less reticulation than a normal network for which $\cD_{\min}$ is the minimum distance matrix. Since a normal network has at most $|X|-2$ reticulations, it has $O(|X|)$ vertices in total~\cite{bic12} (also see~\cite{mcd15}), and so the total number of iterations is at most $O(|X|)$. Thus {\sc Equidistant Normal} completes in $O(|X|^3)$ time. This completes the proof of Theorem~\ref{child1}.

\section{Proof of Theorem~\ref{child2}}
\label{proof2}

In this section, we prove Theorem~\ref{child2}. We begin with two lemmas. Let $\cN$ be a phylogenetic network, and suppose that $\{s, t\}$ is either a cherry or a reticulated cherry in which $t$ is the reticulation leaf in $\cN$. We refer to the parent of $s$ as the {\em tree vertex of $\{s, t\}$}. For the next lemma, the proof of (i) and (iii) are given in~\cite{bor17}, while the proof of (ii) is similar to that of Lemma~\ref{cherry1} and is omitted.

\begin{lemma}
Let $|X|\ge 2$, and let $(\cN, w)$ be a weighted tree-child network on $X\cup \{r\}$, where $r$ is an outgroup. Let $\cD_{\min}$ be the minimum distance matrix and let $\bd_{\max}$ be the maximum distance outgroup vector of $(\cN, w)$. Let $\{s, t\}$ be a $2$-element subset of $X$ such that
$$\cQ_r(s, t)=\max\{\cQ_r(x, y): x, y\in X\},$$
where $\cQ_r(x, y)=\frac{1}{2}\{d_{\max}(r, x)+d_{\max}(r, y)-d_{\min}(x, y)\}$. Then
\begin{enumerate}[{\rm (i)}]
\item $\{s, t\}$ is either a cherry or a reticulated cherry of $(\cN, w)$.

\item $\{s, t\}$ is a cherry of $(\cN, w)$ if and only if
$$d_{\min}(s, x)+d_{\max}(r, t)-d_{\max}(r, s)=d_{\min}(t, x)$$
for all $x\in X-\{s, t\}$. Otherwise, $\{s, t\}$ is a reticulated cherry in which $t$ is the reticulation leaf if
$$d_{\min}(s, x)+d_{\max}(r, t)-d_{\max}(r, s) > d_{\min}(t, x)$$
for some $x\in X-\{s, t\}$.

\item The length of the longest up-down path in $(\cN, w)$ starting at $r$ and ending at the tree vertex of $\{s, t\}$ is $\cQ_r(s, t)$, and $d_{\max}(r, s)$ and $d_{\max}(r, t)$ are each realised by up-down paths that include this tree vertex.
\end{enumerate}
\label{cherry2}
\end{lemma}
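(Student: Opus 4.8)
Statements (i) and (iii) are exactly those proved in~\cite{bor17}, so the only thing to do is (ii), and I would establish it by transcribing the argument of Lemma~\ref{cherry1} into the present setting. Write $\gamma:=d_{\max}(r,t)-d_{\max}(r,s)$, and by (i) let $v$ be the tree vertex of $\{s,t\}$: thus $v$ is the common parent of $s$ and $t$ if $\{s,t\}$ is a cherry, and $v=p_s$ if $\{s,t\}$ is a reticulated cherry with $t$ the reticulation leaf, in which case $p_t$ is a reticulation with parents $v$ and some vertex $g_t$. In every case $v$ is a tree vertex and is the unique parent of $s$.

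The plan is to reduce all the quantities appearing in (ii) to paths through $v$. By (iii), the longest up-down path from $r$ to $v$ has length $\cQ_r(s,t)$ and both $d_{\max}(r,s)$ and $d_{\max}(r,t)$ are realised by up-down paths through $v$; since the portion of such a path below $v$ is a directed path from $v$ to the leaf in question, this gives $d_{\max}(r,s)=\cQ_r(s,t)+w(v,s)$ and $d_{\max}(r,t)=\cQ_r(s,t)+\pi$, where $\pi$ is the weight of the directed path from $v$ down to $t$ (namely $w(v,t)$, resp.\ $w(v,p_t)+w(p_t,t)$, according as $\{s,t\}$ is a cherry or a reticulated cherry), and hence $\gamma=\pi-w(v,s)$. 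Next, because $v$ is a tree vertex every directed path leaving $v$ downwards reaches only $s$ or $t$, so every up-down path from $s$ to a leaf $x\in X-\{s,t\}$ is forced through $v$ and no such path has peak $v$; decomposing at $v$ gives $d_{\min}(s,x)=w(v,s)+m_x$, where $m_x$ is the minimum length of an up-down path from $v$ to $x$. When $\{s,t\}$ is a cherry the same holds with $t$ in place of $s$, so $d_{\min}(t,x)=w(v,t)+m_x=d_{\min}(s,x)+\gamma$ for every such $x$; when $\{s,t\}$ is a reticulated cherry with $t$ the reticulation leaf, the up-down path $t,p_t,v,\dots,x$ gives $d_{\min}(t,x)\le w(p_t,t)+w(v,p_t)+m_x=d_{\min}(s,x)+\gamma$; and, by the symmetric argument, if instead $s$ is the reticulation leaf then $d_{\min}(t,x)\ge d_{\min}(s,x)+\gamma$ for every such $x$. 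These three facts prove the forward implication of the cherry characterisation and show that $d_{\min}(s,x)+\gamma>d_{\min}(t,x)$ for some $x$ can occur only when $\{s,t\}$ is a reticulated cherry with $t$ the reticulation leaf.

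What is left --- and what I expect to be the only step that is not pure bookkeeping --- is the converse: in a reticulated cherry (of either orientation) the displayed equality must fail somewhere, equivalently, when $t$ is the reticulation leaf there is an $x$ with $d_{\min}(t,x)<d_{\min}(s,x)+\gamma$. Here I would reuse the idea from the proof of Lemma~\ref{cutting1}. Since $\cN$ has no shortcuts, $g_t$ is not an ancestor of $s$, so there is a leaf $\ell\in X-\{s,t\}$ reachable from $g_t$ along a directed path containing no reticulation; that path is then the only directed path from $g_t$ to $\ell$, say of total weight $\lambda$, and the up-down path $t,p_t,g_t,\dots,\ell$ shows $d_{\min}(t,\ell)\le w(p_t,t)+w(g_t,p_t)+\lambda$. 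On the other hand $d_{\min}(s,\ell)+\gamma=w(p_t,t)+w(v,p_t)+m_\ell$ by the decomposition above, and every up-down path from $v$ to $\ell$ must pass through $g_t$ (the unique entry to $\ell$) and must traverse the tree edge into $v$, which has strictly positive weight, so $m_\ell>\lambda$; using that the weighting is a reticulation-pair weighting (so $w(g_t,p_t)=w(v,p_t)$), the strict inequality $d_{\min}(t,\ell)<d_{\min}(s,\ell)+\gamma$ follows, and the symmetric statement for the other orientation follows by interchanging $s$ and $t$. Combining this with the previous paragraph gives (ii). The one place where care is genuinely required is in tracking which segments of the competing up-down paths are forced to coincide; once the decomposition at $v$ and the choice of $\ell$ are set up, all of the inequalities are immediate.
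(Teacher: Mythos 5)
Your proposal follows the paper's treatment exactly: the paper likewise disposes of (i) and (iii) by citing \cite{bor17} and omits (ii) with the remark that it is ``similar to that of Lemma~\ref{cherry1}'', and your write-up of (ii) --- decomposing $d_{\max}(r,s)$, $d_{\max}(r,t)$ and $d_{\min}(\cdot,x)$ at the tree vertex of $\{s,t\}$ via (iii), and then forcing strictness in the reticulated-cherry case with a reticulation-free directed path from $g_t$ to a leaf $\ell$ --- is precisely the intended adaptation and is sound. One caveat worth noting: your converse argument uses the absence of shortcuts and the reticulation-pair property $w(g_t,p_t)=w(p_s,p_t)$, neither of which appears in the lemma's stated hypotheses (``weighted tree-child network''); this is really a looseness of the statement rather than a defect of your proof, since the lemma is only ever applied in the paper to reticulation-pair weighted normal networks, and part (iii), imported from \cite{bor17}, already presupposes the reticulation-pair weighting (with an arbitrary weighting one can make $w(g_t,p_t)$ large enough that $d_{\max}(r,t)$ is realised through $g_t$ rather than through the tree vertex of $\{s,t\}$, and with shortcuts the converse of (ii) can fail outright, e.g.\ when $X=\{s,t\}$). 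So you have proved (ii) in the setting in which the paper actually invokes it, which is all the omitted proof could have delivered.
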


\begin{lemma}
Let $|X|\ge 2$, and let $(\cN, w)$ be a reticulation-pair weighted normal network on $X\cup \{r\}$, where $r$ is an outgroup. Let $\cD_{\min}$ and $\bd_{\max}$ be the minimum distance matrix and maximum distance outgroup vector of $(\cN, w)$, respectively. Let $\{s, t\}$ be a $2$-element subset of $X$ such that
$$\cQ_r(s, t)=\max\{\cQ_r(x, y): x, y\in X\}.$$
Then the distance matrix and distance vector obtained from $\cD_{\min}$ and $\bd_{\max}$ by reducing $t$ if $\{s, t\}$ is a cherry and by isolating $\{s, t\}$ if $\{s, t\}$ is a reticulated cherry in which $t$ is the reticulation leaf are the minimum distance matrix $\cD'_{\min}$ and maximum distance outgroup vector $\bd'_{\max}$ of the weighted network obtained from $(\cN, w)$ by reducing $t$ if $\{s, t\}$ is a cherry and isolating $\{s, t\}$ if $\{s, t\}$ is a reticulated cherry in which $t$ is the reticulation leaf.
\label{isolating}
\end{lemma}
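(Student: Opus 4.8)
The plan is to mirror the structure of the proof of Lemma~\ref{cutting1}, splitting into the cherry case and the reticulated-cherry case. When $\{s,t\}$ is a cherry the statement is essentially immediate: reducing $t$ simply deletes $t$ and suppresses its parent, absorbing the weight of $(p_s,s)$ into the edge above; no up-down path between two leaves of $X-\{t\}\cup\{r\}$ is affected, and the paths from $r$ to such leaves are unchanged, so the reduced matrix and vector are exactly $\cD'_{\min}$ and $\bd'_{\max}$ of the reduced network. I would dispatch this in a sentence or two, as in Lemma~\ref{cutting1}.

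The substance is the reticulated-cherry case, where $\{s,t\}$ is a reticulated cherry with $t$ the reticulation leaf, and $(\cN',w')$ is obtained from $(\cN,w)$ by isolating $\{s,t\}$. Write $p_s$ for the tree vertex of $\{s,t\}$, $p_t$ for the reticulation, $g_t$ for the parent of $p_t$ other than $p_s$ (a tree vertex, since we are isolating), $g'_t$ the parent of $g_t$, and $h$ the other child of $g_t$. I would first record what isolation does structurally: it deletes $(g_t,p_t)$ and suppresses $g_t$ and $p_t$, so that $t$ becomes a child of $p_s$ via an edge of weight $w(p_s,p_t)+w(p_t,t)$, while the subtree hanging off $h$ gets pulled up onto $g'_t$. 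Crucially, in $(\cN,w)$ every reticulation edge directed into $p_t$ has the same weight (reticulation-pair weighting), so $w(p_s,p_t)=w(g_t,p_t)$; this is the fact that makes the distance bookkeeping work, and I would flag it up front. Then I would verify three things. First, for $x,y\in(X-\{t\})\cup\{r\}$, the only up-down paths in $\cN$ between $x$ and $y$ that use either deleted edge $(p_s,p_t)$ or $(g_t,p_t)$ are ones that pass through $t$ (for $(p_s,p_t)$) or that go $\cdots g'_t,g_t,p_t,\cdots$ — but such a path continues to $t$, so again involves $t$; hence $d'_{\min}(x,y)=d_{\min}(x,y)$ and $d'_{\max}(r,x)=d_{\max}(r,x)$ for those entries, matching the ``keep $d(x,y)=d'(x,y)$ for $x,y\neq t$'' clause of isolation on matrices, and similarly for the vector. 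Second, for $x\in X-\{s,t\}$, I would show $d'_{\min}(t,x)=d_{\min}(s,x)+\gamma$ where $\gamma=d_{\max}(r,t)-d_{\max}(r,s)$: using Lemma~\ref{cherry2}(iii), $d_{\max}(r,s)$ and $d_{\max}(r,t)$ are realised by up-down paths through the tree vertex $p_s$, so $\gamma$ equals the difference between the $p_s$-to-$t$ path length and the edge $w(p_s,s)$, i.e.\ $\gamma=\bigl(w(p_s,p_t)+w(p_t,t)\bigr)-w(p_s,s)$, which is exactly the adjustment needed to convert a shortest $s$-to-$x$ path into a shortest $t$-to-$x$ path in $\cN'$ (a shortest $t$-to-$x$ up-down path in $\cN'$ goes up from $t$ to $p_s$ and then follows a shortest $p_s$-to-$x$ sub-structure, which in turn realises $d_{\min}(s,x)$ after replacing the $p_s$-to-$t$ leg by the $p_s$-to-$s$ edge). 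Third, $d'_{\min}(t,s)=d_{\min}(t,s)$, which is clear since the shortest $t$-to-$s$ path in $\cN$ is the two-edge path $t,p_t,p_s,s$ and it survives isolation with the same total weight $w(p_s,p_t)+w(p_t,t)+w(p_s,s)$.

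The main obstacle, as usual in this circle of arguments, is the second point: proving that the $+\gamma$ shift is simultaneously correct and that no \emph{other}, shorter, up-down path from $t$ to $x$ appears in $\cN'$ that did not have an $s$-counterpart in $\cN$. Here the reticulation-pair hypothesis and normality both do real work: normality (no shortcuts) guarantees $g_t$ is not an ancestor of $s$ and that there are reticulation-free paths down to leaves, constraining where the peaks of optimal paths can sit; the equal-weight condition $w(p_s,p_t)=w(g_t,p_t)$ ensures that whatever a path ``loses'' by being rerouted through $p_s$ instead of through $g_t$ after isolation is compensated exactly, so the pointwise identity $d'_{\min}(t,x)=d_{\min}(s,x)+\gamma$ holds with no inequality slack. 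I would carry this out by a short case analysis on the position of the peak of a shortest $t$-to-$x$ up-down path in $\cN'$ — either it lies at or above $p_s$, in which case it corresponds to an $s$-to-$x$ path in $\cN$ with the stated offset, or it lies strictly below $p_s$, which (using that $p_s$'s non-$s$ child side now leads only through the former $g_t$-subtree and $h$) is handled using Lemma~\ref{cherry2}(iii) again. Once the three identities are in hand, comparing them termwise with the definition of ``isolating $\{s,t\}$ in $\cD_{\min}$'' and the corresponding adjustment of $\bd_{\max}$ finishes the proof; I would also invoke Lemma~\ref{unchanged} to note $(\cN',w')$ is again a reticulation-pair weighted normal network so that the conclusion is well-posed.
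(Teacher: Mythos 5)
Your plan is essentially the paper's own argument: dispose of the cherry case immediately, note that the only up-down paths through the deleted edges involve $t$ so that only the entries involving $t$ need checking, identify $\gamma=d_{\max}(r,t)-d_{\max}(r,s)=\bigl(w(p_s,p_t)+w(p_t,t)\bigr)-w(p_s,s)$ via Lemma~\ref{cherry2}(iii), and use the reticulation-pair equality $w(g_t,p_t)=w(p_s,p_t)$ for the $(t,s)$ entry. Two remarks. First, the ``main obstacle'' you anticipate is in fact vacuous: after isolating, $p_t$ is suppressed and $\{s,t\}$ is a cherry with common parent $p_s$ in $(\cN',w')$, so \emph{every} up-down path from $t$ passes through $p_s$ and corresponds bijectively (by swapping the first edge) to an up-down path from $s$; the peak of a shortest $t$-to-$x$ path can never lie strictly below $p_s$, and the identity $d'_{\min}(t,x)=d_{\min}(s,x)+\gamma$ follows at once, with no case analysis needed.

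Second, there is one genuine omission: you never verify the $t$-coordinate of the outgroup vector, i.e.\ that $d'_{\max}(r,t)=d_{\max}(r,t)$ in $(\cN',w')$. Your first verification item only covers $d'_{\max}(r,x)$ for $x\neq t$, and your second and third items concern $\cD'_{\min}$ only. This entry is not automatic: in $(\cN,w)$ the longest up-down path from $r$ to $t$ could a priori descend through $g_t$, and deleting $(g_t,p_t)$ would then strictly decrease the maximum, making it wrong to leave the vector unchanged at $t$. This is precisely where the paper invokes Lemma~\ref{cherry2}(iii): $d_{\max}(r,t)$ is realised by an up-down path through the tree vertex $p_s$ of $\{s,t\}$, of length $\cQ_r(s,t)+w(p_s,p_t)+w(p_t,t)$, and that path survives the isolation, so the maximum is preserved. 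The fix is one line using a lemma you already cite, but as written the conclusion about $\bd'_{\max}$ is incomplete.
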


\begin{proof}
By Lemma~\ref{cherry2}, $\{s, t\}$ is either a cherry or a reticulated cherry of $(\cN, w)$. By the same lemma, if $\{s, t\}$ is a reticulated cherry, then we may assume, without loss of generality, that $t$ is the reticulation leaf. Furthermore, it follows by Lemma~\ref{unchanged} that $(\cN', w')$ is a reticulation-pair normal network with outgroup $r$. If $\{s, t\}$ is a cherry, then it is clear that the lemma holds. Therefore, suppose that $\{s, t\}$ is a reticulated cherry, in which case $(\cN', w')$ is obtained from $(\cN, w)$ by isolating $\{s, t\}$. Let $\cD'$ and $\bd'$ be the distance matrix and distance vector obtained from $\cD_{\min}$ and $\bd_{\max}$ by isolating $\{s, t\}$. We will show that $\cD'$ and $\bd'$ are the minimum distance matrix $\cD'_{\min}$ and maximum distance outgroup vector $\bd'_{\max}$ of $(\cN', w')$.

Let $p_s$ and $p_t$ denote the parents of $s$ and $t$, respectively, and let $g_t$ denote the parent of $p_t$ that is not $p_s$ in $(\cN, w)$. Note that, as $\cN$ is normal, $g_t$ is a tree vertex and not an ancestor of $p_s$. Since the only up-down paths in $(\cN, w)$ joining elements in $X$ which traverse $(g_t, p_t)$ involve $t$, it follows that to complete the proof, it suffices to show that $d'(t, x)=d'_{\min}(t, x)$ for all $x\in X-\{t\}$ and $d'(r, t)=d'_{\max}(r, t)$.

By Lemma~\ref{cherry2}(iii),
$$d'(r, t)=d'_{\max}(r, t).$$
Furthermore, let $\gamma=d_{\max}(r, t)-d_{\max}(r, s)$. Then, by Lemma~\ref{cherry2},
$$d'_{\min}(t, x)=d'_{\min}(s, x)+\gamma$$
for all $x\in X-\{s, t\}$, so $d'(t, x)=d'_{\min}(t, x)$ for all $x\in X-\{s, t\}$. Lastly, as $w$ is a reticulation-pair weighting, $w(g_t, p_t)=w(p_s, p_t)$, so $d'(t, s)=d'_{\min}(t, s)$. This completes the proof of the lemma.
\end{proof}

We next establish the uniqueness part of Theorem~\ref{child2}.

\begin{proof}[Proof of the uniqueness part of Theorem~\ref{child2}]
The proof is by induction on the sum of the number $n$ of leaves and the number $k$ of reticulations in $(\cN, w)$. If $n+k=1$, then $n=1$ and $k=0$, and so $(\cN, w)$ consists of the single vertex in $X$, in which case the uniqueness holds. If $n+k=2$, then $n=2$, $k=0$, and $(\cN, w)$ consists of two leaves attached to the root, one of which is the outgroup $r$. Again, the uniqueness holds. Now suppose that $n+k\ge 3$, so $n\ge 3$, and the uniqueness holds for all reticulation-pair weighted normal networks for which the sum of the number of leaves and the number of reticulations is at most $n+k-1$.

Let $\{s, t\}$ be a $2$-element subset of $X$ such that
$$\cQ_r(s, t)=\max\{\cQ_r(x, y): x, y\in X\}.$$
By Lemma~\ref{cherry2}, $\{s, t\}$ is either a cherry or a reticulated cherry. If $\{s, t\}$ is a reticulated cherry, then, by the same lemma, $\cD_{\min}$ and $\bd_{\max}$ determine whether $s$ or $t$ is the reticulation leaf. Thus, without loss of generality, we may assume that $t$ is the reticulation leaf. Let $(\cN', w')$, $\cD'$, and $\bd'$ be the weighted phylogenetic network, distance matrix, and distance vector obtained from $(\cN, w)$, $\cD_{\min}$, and $\bd_{\max}$, respectively, by reducing $t$ if $\{s, t\}$ is a cherry or isolating $\{s, t\}$ if $\{s, t\}$ is a reticulated cherry. Now $(\cN', w')$ either has $n-1$ leaves and $k$ reticulations, or $n$ leaves and $k-1$ reticulations, and so, by Lemma~\ref{isolating} and the induction assumption, up to equivalence, $(\cN', w')$ is the unique reticulation-pair weighted normal network with outgroup $r$ whose minimum distance matrix is $\cD'$ and maximum distance outgroup vector is $\bd'$.

Let $(\cN_1, w_1)$ be a reticulation-pair weighted normal network on $X$ with outgroup $r$ whose minimum distance matrix is $\cD_{\min}$ and maximum distance outgroup vector is $\bd_{\max}$. By Lemma~\ref{cherry2}, $\{s, t\}$ is either a cherry or a reticulated cherry in $(\cN_1, w_1)$. Moreover, by the same lemma, $\{s, t\}$ is a cherry in $(\cN_1, w_1)$ if and only if it is a cherry in $(\cN, w)$. First assume that $\{s, t\}$ is a cherry in $(\cN, w)$. Let $(\cN'_1, w'_1)$ be the reticulation-pair weighted normal network with outgroup $r$ obtained from $(\cN_1, w_1)$ by reducing $t$. By Lemma~\ref{isolating}, $\cD'$ and $\bd'$ are the minimum distance matrix and maximum distance outgroup vector of $(\cN'_1, w'_1)$. Thus, by the induction assumption, $(\cN'_1, w'_1)$ and $(\cN', w')$ are equivalent. Using this equivalence and considering $d_{\min}(s, t)$, it is easily checked that $(\cN_1, w_1)$ and $(\cN, w)$ are equivalent.

Now assume that $\{s, t\}$ is a reticulated cherry in $(\cN, w)$. Then $\{s, t\}$ is a reticulated cherry in $(\cN_1, w_1)$ where, by Lemma~\ref{cherry2}, $t$ is the reticulation leaf. Let $(\cN'_1, w'_1)$ be the reticulation-pair weighted normal network with outgroup $r$ obtained from $(\cN_1, w_1)$ by isolating $\{s, t\}$. Since $\cD_{\min}$ and $\bd_{\max}$ are the minimum distance matrix and maximum distance outgroup vector of $(\cN_1, w_1)$, it follows by Lemma~\ref{isolating} that $\cD'$ and $\bd'$ are the minimum distance matrix and maximum distance outgroup vector of $(\cN'_1, w'_1)$. So, by the induction assumption, $(\cN'_1, w'_1)$ and $(\cN', w')$ are equivalent.

In $(\cN, w)$, let $p_s$ and $p_t$ denote the parents of $s$ and $t$, respectively, and let $g_t$ denote the parent of $p_t$ that is not $p_s$. Since $\cN$ is normal, $g_t$ is a tree vertex and not an ancestor of $p_s$. We next show that there is precisely one choice for the attachment of the edge in $(\cN', w')$, and thus also in $(\cN'_1, w'_1)$, corresponding to $(g_t, p_t)$ in $(\cN, w)$.

Since $\cN$ is normal, there is a (directed) path $P$ from $g_t$ to a leaf,  say $\ell$, containing no reticulations. Since $w$ is reticulation-pair, $d_{\min}(t, \ell)$ is the length of the up-down path whose union of edges consists of the edges in $\{(g_t, p_t), (p_t, t)\}$ and $P$. Thus, if we knew $\ell$, then, to locate the place in $(\cN', w')$ at which to insert $g_t$, we simply start at $\ell$ and follow the unique path against the direction of the edges towards the root until we reach a distance
$$d_{\min}(t, \ell)-(d_{\max}(r, t)-\cQ_r(s, t))$$
from $\ell$, since the bracketed term gives the combined length of $(g_t, p_t)$ and $(p_t, t)$. However, {\em a priori}, we do not know $\ell$. So there are potentially $O(n)$ places in $(\cN', w')$ at which we could insert $g_t$. We claim there is exactly one such place to insert $g_t$ so that the resulting weighted network (after subdividing the edge incident with $t$, inserting a new vertex $p_t$ and adding the new edge $(g_t, p_t)$) has minimum distance matrix $\cD_{\min}$ and maximum distance outgroup vector $\bd_{\max}$ and no zero-length tree-edges.

We call a leaf $\ell'$ a \emph{candidate leaf} if
\begin{itemize}
\item the path starting at $\ell'$ and going against the direction of the edges (and, thus, towards the root) a distance
$d_{\min}(t, \ell')-(d_{\max}(r, t)-\cQ_r(s, t))$ does not traverse a reticulation;
\item the unique position along this path at a distance $d_{\min}(t, \ell')-(d_{\max}(r, t)-\cQ_r(s, t))$ from $\ell'$, denoted $g_{\ell'}$, is not a vertex, that is, $g_{\ell'}$ is partway along an edge of $(\cN', w')$; and
\item $g_{\ell'}$ is not an ancestor of $p_s$.
\end{itemize}
Note that the unknown leaf $\ell$ is a candidate leaf. Moreover, if the second or third conditions were not satisfied and we tried to reconstruct a network by inserting $g_t$ at position $g_{\ell'}$ we would either need to introduce zero-weight tree edges or we would introduce a shortcut, contradicting the assumptions about $(\cN, w)$.

We now show that if $g_{\ell'}$ is not at the same position as $g_t$, then $g_{\ell'}$ is an ancestor of $g_t$. Suppose not, then a minimum length up-down path in $(\cN, w)$ from $t$ to $\ell'$ via $g_{t}$ must traverse the edge containing position $g_{\ell'}$. But then the length of this up-down path is not $d_{\min}(t, \ell')$, by definition of $g_{\ell'}$. Likewise, a minimum length up-down path in $(\cN, w)$ from $t$ to $\ell'$ via $p_{s}$ must traverse the edge containing position $g_{\ell'}$ (since $p_s$ is not a descendant of $g_{\ell'}$). Again the length of this up-down path is not $d_{\min}(t, \ell')$. This contradicts the fact that $(\cN, w)$ has minimum distance matrix $\cD_{\min}$.

Finally, observe that if $g_{\ell'}$ is an ancestor of $g_t$, then the minimum path length between $t$ and $\ell$ in the network obtained from $(\cN', w')$ by adding an edge $(g_{\ell'}, p_t)$ will be strictly larger than $d_{\min}(t, \ell)$. This establishes the claim. Moreover, the correct position $g_t$ can be found as the unique common descendant of all candidate positions $g_{\ell'}$. It now follows that, as $(\cN'_1, w'_1)$ and $(\cN', w')$ are equivalent, $(\cN_1, w_1)$ and $(\cN, w)$ are equivalent. This completes the proof of the uniqueness part of Theorem~\ref{child2}.

\end{proof}

\subsection{The Algorithm}

Let $(\cN, w)$ be a reticulation-pair weighted normal network on $X\cup \{r\}$, where $r$ is an outgroup, and let $\cD_{\min}$ and $\bd_{\max}$ denote the minimum distance matrix and maximum distance outgroup vector of $(\cN, w)$. The following algorithm, called {\sc Reticulation-Pair Normal}, takes as input $X$, $\cD_{\min}$, and $\bd_{\max}$ and returns a weighted network $(\cN_0, w_0)$ equivalent to $(\cN, w)$ in which all reticulation edges have weight zero. As before, the proof of its correctness is essentially established in proving the uniqueness part of the theorem and so is omitted. But its running time is given at the end.

\begin{enumerate}[1.]
\item If $|X|=1$, then return the weighted phylogenetic network consisting of the single vertex in $X$.

\item If $|X|=2$, say $X=\{r, s\}$, then return the phylogenetic network $(\cN_0, w_0)$ consisting of leaves $r$ and $s$ adjoined to the root $\rho$ with $(\rho, r)$ and $(\rho, s)$ positively weighted so that $d_{\max}(r, s)=w(\rho, r)+w(\rho, s)$.

\item Else, find a $2$-element subset $\{s, t\}$ of $X$ such that
$$\cQ_r(s, t)=\max\{\cQ_r(x, y): x, y\in X\}.$$
\begin{enumerate}[(a)]
\item If $d_{\min}(s, x)+d_{\max}(r, t)-d_{\max}(r, s)=d_{\min}(t, x)$ for all $x\in X$ (so $\{s, t\}$ is a cherry), then
\begin{enumerate}[(i)]
\item Reduce $t$ in $\cD_{\min}$ and $\bd_{\max}$ to give the distance matrix $\cD'$ and distance vector $\bd'$ on $X'=X-\{t\}$.

\item Apply {\sc Reticulation-Pair Normal} to input $X'\cup \{r\}$, $\cD'$, and $\bd'$. Construct $(\cN_0, w_0)$ from the returned weighted phylogenetic network $(\cN'_0, w'_0)$ on $X'$ as follows. If $p'_s$ is the parent of $s$ in $(\cN'_0, w'_0)$, then subdivide $(p'_s, s)$ with a new vertex $p_s$, adjoin a new leaf $t$ to $p_s$ via a new edge $(p_s, t)$, and set
$$w_0(p_s, s)=d_{\max}(r, s)-\cQ_r(s, t),$$
$$w_0(p'_s, p_s)=w'_0(p'_s, s)-w_0(p_s, s),$$
and
$$w_0(p_s, t)=d_{\min}(s, t)-w_0(p_s, s).$$
Keeping all other edges weight the same as their counterparts in $(\cN'_0, w'_0)$, return $(\cN_0, w_0)$.
\end{enumerate}

\item Else ($\{s, t\}$ is a reticulated cherry, in which $t$ is the reticulation leaf if there exists an $x\in X-\{s, t\}$ such that $d_{\min}(t, x) < d_{\min}(s, x)+d_{\max}(r, t)-d_{\max}(r, s)$),
\begin{enumerate}[(i)]
\item Isolate $\{s, t\}$ in $\cD_{\min}$ and $\bd_{\max}$ to give the distance matrix $\cD'$ and distance vector $\bd'$ on $X$.

\item Apply {\sc Reticulation-Pair Normal} to input $X\cup \{r\}$, $\cD'$, and $\bd'$. Construct $(\cN_0, w_0)$ from the returned weighted phylogenetic network $(\cN'_0, w'_0)$ on $X\cup \{r\}$ as follows. For each leaf $\ell$ in $X-\{s, t\}$, follow the unique path starting at $\ell$ and going against the direction of the edges towards the root until either a reticulation or a distance 
$$d_{\min}(t, \ell)-(d_{\max}(r, t)-\cQ_r(s, t))$$
from $\ell$ is reached. Amongst the points reached, insert a new vertex $g_t$ in the unique point that is a descendant of all the other points reached and  weight the edges incident with $g_t$ appropriately. Now, subdivide the edge incident with $t$ with a new vertex $p_t$, add the new edge $(g_t, p_t)$, and set $w_0(p_s, p_t)=0$, $w_0(g_t, p_t)=0$, and
$$w_0(p_t, t)=d_{\max}(r, t)-\cQ_r(s, t),$$
where $p_s$ is the parent of $s$. Keeping all other edges the same weight as their counterparts in $(\cN'_0, w'_0)$, return $(\cN_0, w_0)$.
\end{enumerate}
\end{enumerate}
\end{enumerate}

For the running time, {\sc Reticulation-Pair Normal} takes as input a set $X\cup \{r\}$, a $|X|\times |X|$ distance matrix $\cD_{\min}$ whose entries are the minimum-length of an up-down path joining elements in $X$, and a distance vector $\bd_{\max}$ of length $|X|$ whose entries are the maximum length of an up-down path from $r$ to each element in $X$ of a reticulation-pair normal network $(\cN, w)$ on $X\cup \{r\}$, where $r$ is an outgroup. If $|X|\in \{1, 2\}$, then the algorithm runs in constant time. If $|X|\ge 3$, each iteration begins by finding a $2$-element subset $\{s, t\}$ of $X$ such that
$$\cQ_r(s, t)=\max\{\cQ_r(x, y): x, y\in X\}.$$
This takes $O(|X|^2)$ time, and once such a $2$-element subset is found, we construct $\cD'$ and $\bd'$. This construction is done in one of two ways depending on whether or not
$$d_{\min}(s, x)+d_{\max}(r, t)-d_{\max}(r, s)=d_{\min}(t, x)$$
for all $x\in X-\{s, t\}$. If, for some $x$,
$$d_{\min}(s, x)+d_{\max}(r, t)-d_{\max}(r, s)\neq d_{\min}(t, x),$$
we need to additionally check which of
$$d_{\min}(s, x)+d_{\max}(r, t)-d_{\max}(r, s) < d_{\min}(t, x)$$
and
$$d_{\min}(s, x)+d_{\max}(r, t)-d_{\max}(r, s) > d_{\min}(t, x)$$
holds. Thus the determination of which way to construct $\cD'$ and $\bd'$ can be done in $O(|X|)$ time. Whether $\cD'$ and $\bd'$ is constructed by reducing an element of $X$ or isolating a $2$-element subset of $X$, the construction can be done in $O(|X|)$ time. Once $(\cN'_0, w'_0)$ is returned, it takes constant time to augment to $(\cN_0, w_0)$ if $\cD'$ and $\bd'$ have been obtained from $\cD_{\min}$ and $\bd_{\max}$ by reducing. Otherwise, we need to find the unique place in $(\cN'_0, w'_0)$ to insert $g_t$. Since $(\cN, w)$ has $O(|X|)$ vertices in total~\cite{bic12}, it takes at most $O(|X|^2)$ time to find the possible locations in which to insert $g_t$. Finding the correct location, the one that is a descendant of all the others, can be done in $O(|X|)$ time by repeatedly deleting vertices of out-degree zero until the first possible location appears as a vertex of out-degree zero. Thus $(\cN_0, w_0)$ can be returned in $O(|X|^2)$ time, and so the total time of each iteration is $O(|X|^2)$.

When we recurse, the distance matrix $\cD'$ and distance vector $\bd'$ inputted to the recursive call is the minimum distance matrix and maximum distance outgroup vector of a normal network with either one less leaf or one less reticulation than $(\cN, w)$. As normal networks have at most $|X|-2$ reticulations, and therefore $O(|X|)$ vertices in total~\cite{bic12}, the total number of iterations is at most $O(|X|)$. Hence {\sc Reticulation-Pair Normal} completes in $O(|X|^3)$ time, thereby completing the proof of Theorem~\ref{child2}.

\section*{Acknowledgements}

Katharina Huber and Vincent Moulton thank the Biomathematics Research Centre at the University of Canterbury for its hospitality.

\end{document}